\def\ep{{\bm \epsilon}}
\def\sig{{\bm \sigma}}
\def\bU{{\boldsymbol U}}
\def\bV{{\boldsymbol V}}
\def\bu{{\boldsymbol u}}
\def\bv{{\boldsymbol v}}
\def\C{\Lambda}
\renewcommand{\div}{\operatorname{div}}
\def\ep{\bm \epsilon}
\def\sig{\bm \sigma}
\def\bU{\boldsymbol U}
\def\bV{\boldsymbol V}
\def\bP{\boldsymbol P}
\def\bff{\boldsymbol f}
\def\bu{\boldsymbol u}
\def\bv{\boldsymbol v}
\def\bp{\boldsymbol p}
\def\bw{\boldsymbol w}
\def\bo{\boldsymbol 0}
\def\bz{\boldsymbol z}
\def\bq{\boldsymbol q}
\def\divv{\text{div\,}}
\def\Divv{\text{Div\,}}
\renewcommand{\div}{\operatorname{div}}
\def\divv{\text{div$\,$}}
\def\ep{\bm \epsilon}
\def\sig{\bm \sigma}
\def\bp{\boldsymbol{p}}
\def\bP{{\boldsymbol P}}
\def\bq{\boldsymbol{q}}
\def\bu{{\boldsymbol u}}
\def\bU{{\boldsymbol U}}
\def\bff{{\boldsymbol f}}
\def\bo{{\boldsymbol 0}}
\def\bv{{\boldsymbol v}}
\def\bV{{\boldsymbol V}}
\def\bz{{\boldsymbol z}}
\def\bw{{\boldsymbol w}}
\def\be{\boldsymbol e}
\definecolor{viol}{rgb}{0.75,0.15,0.95}
\definecolor{orange}{rgb}{0.0,0.0,0.0}
\definecolor{pinkred}{rgb}{0.0,0.0,0.0}
\definecolor{pinkredd}{rgb}{0.0,0.0,0.0}
\newtheorem{theorem}{Theorem}
\newtheorem{lemma}[theorem]{Lemma}
\begin{document}
% Do not eliminate the following size-changing command, please:
\large

%\begin{flushleft}
%
%% Title:
%{\sffamily\bfseries \Large }\\
%\vspace*{3mm}
%% 
%\end{flushleft}

\title{Parameter-robust convergence analysis of fixed-stress split iterative method for multiple-permeability poroelasticity systems}
\author{Qingguo Hong, Johannes Kraus, Maria Lymbery, Mary F. Wheeler}
\maketitle
\begin{abstract}
We consider flux-based multiple-porosity/multiple-permeability poroelasticity systems describing mulitple-network flow and deformation in a 
poro-elastic medium, also referred to as MPET models. The focus of the paper is on the convergence analysis of the fixed-stress split iteration, a commonly used coupling technique for the 
flow and mechanics equations defining poromechanical systems. We formulate the fixed-stress split method in this context and prove its linear convergence. 
The contraction rate of this fixed-point iteration does not depend on any of the physical parameters appearing in the model.  
This is confirmed by numerical results which further demonstrate the advantage of the fixed-stress split scheme over a fully implicit method relying 
on norm-equivalent preconditioning.

\end{abstract}
{\small\textbf{Keywords:} multiple-porosity/multiple-permeability poroelasiticity, MPET system, fixed-stress split iterative coupling, convergence analysis}

\section{Introduction}
Double-porosity poroelasticity models have been used to describe the motion of liquids in fissured rocks as early as in~\cite{Barenblatt1960basic}. 
As a generalization of Biot's theory of consolidation,~\cite{Biot1941general,Biot1955theory}, they have been further extended in the framework of
multiple-network poro\-elastic theory (MPET) where the deformable elastic matrix is permeated by more than two fluid networks 
with differing porosities and permeabilities. The latter find important applications in biophysics and medicine, 
see~\cite{TullyVentikos2011cerebral,Chou2016afully,Guo_etal2018subject-specific,Vardakis2016investigating}.

In a bounded Lipschitz domain $\Omega \subset \mathbb R^d$, $d=2,3$, the mathematical model is described by the MPET system:
\begin{subequations}\label{eq:MPET}
\begin{align}
-\divv \sig + \sum_{i=1}^{n}\alpha_i \nabla p_i  &= \bff~~ \text{in}~~ \Omega\times (0,T),\label{MPET1}\\
\bv_i &= -K_i \nabla p_i\;\; \text{in}~~ \Omega\times (0,T),%~~ i=1,\ldots,n,  
\label{MPET2} \\
-\alpha_i \divv \dot{\bu} - \divv \bv_i - c_{p_{i}} \dot{p}_{i} - \sum_{\substack{j=1\\j\neq i}}^{n}\beta_{ij} (p_i-p_j) &=g_i\;\;\text{in}~~ \Omega\times (0,T),%~~  i=1,\ldots,n,
\label{MPET3}
\end{align}
\end{subequations}
where~\eqref{MPET1} and~\eqref{MPET2} are for $i=1,\ldots,n$. Here
\begin{equation}
	\sig = 2\mu \ep(\bu) + \lambda \text{div}(\bu)\bm I \quad \text{and}\quad \ep(\bu)  = \frac{1}{2}(\nabla \bu + (\nabla \bu)^T), \label{constitutive_compatibility}
\end{equation}
denote the effective stress and the strain tensor respectively and the Lam\'e parameters $\lambda$ and $\mu$ are expressed in terms of the modulus of
elasticity $E$ and the Poisson ratio $\nu\in[0,1/2)$ by
$\lambda:=\frac{\nu E}{(1+\nu)(1-2\nu)}$, $\mu:=\frac{E}{2(1+\nu)}$.
The displacement $\bu$, fluxes $\bv_i$ and corresponding pressures $p_i$, $i=1,\dots,n$, are the unknown physical quantities. 

The constants $\alpha_i$ in~\eqref{MPET1} are known as Biot-Willis parameters while $\bff$ represents the body force density.  
The hydraulic conductivity tensors $K_i$ in~\eqref{MPET2} are defined as the permeability divided by the viscosity of the $i$-the network.
The constants $c_{p_{i}}$ in~\eqref{MPET3} denote the constrained specific storage coefficients, see e.g.~\cite{Showalter2010poroelastic} and the references therein. 
The network transfer coefficients $\beta_{ij}$ couple the network pressures and hence $\beta_{ij}=\beta_{ji}$. 
Fluid extractions or injections enter the system via the source terms $g_i$ in~\eqref{MPET3}.

The system~\eqref{eq:MPET} is well posed under proper boundary and initial conditions. For stability reasons, this system is discretized in time by an implicit method. 
This creates 
a coupled static problem in each time step. The latter can be solved fully implicit, using a loose or explicit coupling, or an iterative coupling. In general, the loosely or 
explicitly coupled approach is less accurate than the fully implicit one which, however, is normally more computationally expensive. Iterative coupling is a commonly used 
alternative to avoid the disadvantages of the aforementioned approaches. The most popular procedures in this category are the undrained split, the 
fixed-stress split, the drained split and the fixed-strain split iterative methods. As shown in~\cite{Kim2011stability}, in contrast to the drained split and the fixed-strain split methods, 
the undrained split and fixed-stress split methods are unconditionally stable. 

Convergence estimates and the rate of convergence for latter methods have been derived 
in~\cite{Mikelic2013convergence} for the quasi-static Biot system. The convergence and error analysis of an iterative coupling scheme for solving a fully discretized Biot system 
based on the fixed-stress split 
has been provided in~\cite{almani2017convergence}. Linear convergence in energy norms of a variant of the fixed-stress split iteration applied to 
heterogenous media has been shown in~\cite{Both2017robust} for linearized Biot's equations. 

Other variants of the fixed-stress split iterative scheme include a two-grid 
algorithm in which the flow subproblem of the Biot system is solved on a fine grid whereas the poromechanics subproblem is solved on a 
coarse grid, see~\cite{dana2018convergence2}, 
or the multi-rate fixed-stress split iterative scheme which exploits different time scales for the mechanics and flow problems by taking several finer time steps for flow within one 
coarse time step for the mechanics of the system, see~\cite{Almani2016convergence}. 

The fixed-stress split scheme has also been successfully applied and proved convergent for space-time finite element approximations of the quasi-static Biot system,
cf.~\cite{Bause2017space}.
In the context of unsaturated materials, it can be used for linearization of non-linear poromechanics problems.
When combined with Anderson acceleration, as shown in~\cite{Both2018Anderson}, this yields a highly efficient method. 
The optimization of the stabilization parameter 
that serves the acceleration of the fixed-stress iterative method is considered for the Biot problem in the two-field formulation in~\cite{Storvik2018on}. 

In this paper we propose a fixed-stress split method for the MPET system. We prove its linear convergence and, furthermore, show 
with a proper choice for the stabilization parameter  
that the rate of convergence is independent of the physical parameters in the model. 
These theoretical findings are also tested computationally. 
The obtained numerical results support the proven convergence rate estimate and demonstrate the precedence of the fixed-stress split iterative method over 
the MinRes algorithm with norm-equivalent preconditioning.

The remainder of the paper is structured as follows. In Section~\ref{properties} we introduce notation and recall some important stability properties of the flux-based MPET system, 
see~\cite{Hong2018conservativeMPET}, also~\cite{HongKraus2017parameter} for the special case of Biot's system, which are to be used later. 
Section~\ref{fixed-stress} contains the main contribution of the paper. There, the fixed-stress algorithm for the MPET system is formulated and a parameter-robust convergence rate estimate 
proven. Numerical tests for the proposed fixed-stress split iterative coupling scheme are presented in Section~\ref{numerics}. Section~\ref{conclusions} gives concluding remarks.
\section{Properties of the flux-based MPET problem}\label{properties}

Firstly, we present the operator form of the MPET equations~\eqref{eq:MPET}.
After imposing boundary and initial conditions to this system
to obtain a well-posed problem, 
we use the backward Euler method for its time discretization. Subsequently, a static problem in each time step has to be solved 
which with rescaling and proper variable substitutions has the form:
\begin{equation}\label{eq:7}
\mathscr{A}%\begin{bmatrix}
\left[
{\boldsymbol u}^T, % &
{\boldsymbol v}_1^T, % &
\dots , % &
{\boldsymbol v}_n^T, % &
p_1, % &
\dots , % &
p_n
\right]^T
%\end{bmatrix}^T
=
%\begin{bmatrix}
\left[
{\boldsymbol f}^T, % &
{\boldsymbol 0}^T, % &
\dots , % &
{\boldsymbol 0}^T, % &
{g}_1, % &
\dots , % &
{g}_n
\right]^T. 
\end{equation}
Here
\begin{equation}\label{operator:A}
\mathscr{A}:=
\begin{bmatrix}
- {\text{div\,}} {\boldmath \epsilon}  - \lambda \nabla {\text{div\,}}     & 0  & \dots     & \dots&0      &   \nabla & \dots & \dots & \nabla    \\
\\
0      & R_1^{-1}I &     0      & \dots  &0         &  \nabla  &  0      & \dots  &0 \\
\vdots &  0           &     \ddots &     & \vdots       &  0           &     \ddots &     & \vdots \\
\vdots &      \vdots  &            &\ddots  & 0            &       \vdots  &            &\ddots  & 0 \\
0      &     0        &  \dots     & 0      &R_n^{-1}I&  0        &  \dots     & 0   & \nabla\\
\\
- {\text{div\,}} &-  {\text{div\,}}&0 &\dots   &  0            &\tilde\alpha_{11}I& \alpha_{12} I & \dots& \alpha_{1n}I\\
\vdots                 &   0       &   \ddots        &                 & \vdots      &  \alpha_{21}I  & \ddots & & \alpha_{2n}I\\
\vdots                 &  \vdots  &              & \ddots              & 0    &  \vdots &  & \ddots & \vdots\\
- {\text{div\,}} &   0    & \dots &   0    &-  {\text{div\,}}  & \alpha_{n1}I  &  \alpha_{n2}I &   \dots   & \tilde\alpha_{nn} I  \\     
\end{bmatrix}
\end{equation}
is the rescaled operator, $\tau$ the time step size and
$$R^{-1}_i := \tau^{-1}K_i^{-1}\alpha_i^2,\quad \alpha_{p_i} := \frac{c_{p_i}}{\alpha_i^2},\quad \beta_{ii} := \sum_{\substack{j=1\\j\neq i}}^{n}\beta_{ij},\quad
{\alpha}_{ij} := \frac{\tau \beta_{ij}}{\alpha_i \alpha_j},\quad \tilde\alpha_{ii}:=- \alpha_{p_i} - \alpha_{ii}$$ for $i,j=1,\cdots,n$.
General and plausible assumptions for the scaled parameters, namely, 
\begin{align}\label{parameter:range}
{\lambda > 0,}\quad  R^{-1}_1 ,\dots, R^{-1}_n > 0 ,\quad \alpha_{p_1},\dots,\alpha_{p_n} \ge 0,\quad \alpha_{ij}\ge 0, ~~~i, j=1,\dots, n
\end{align}
are made.

\subsection{Preliminaries and notation}
Denote ${\boldsymbol v}^T:=({\boldsymbol v}_1^T,\dots,{\boldsymbol v}_n^T)$, ${\boldsymbol z}^T:=({\boldsymbol z}_1^T,\dots,{\boldsymbol z}_n^T)$, 
${\boldsymbol p}^T:=(p_1,\dots,p_n)$, ${\boldsymbol q}^T:=(q_1,\dots,q_n)$ where 
${\boldsymbol v}, {\boldsymbol z} \in {\boldsymbol V}={\boldsymbol V}_1\times \dots\times {\boldsymbol V}_n$,
${\boldsymbol p}, {\boldsymbol q} \in {\boldsymbol P}=P_1\times \dots\times P_n$ and ${\boldsymbol U} \hspace{-0.4ex}=\hspace{-0.4ex} \{ {\boldsymbol u} \in H^1(\Omega)^d: {\boldsymbol u} = {\boldsymbol 0}
\text{ on } \Gamma_{{\boldsymbol u},D} \},
{\boldsymbol V}_i \hspace{-0.4ex}=\hspace{-0.4ex} \{ {\boldsymbol v}_i\in H({\text{div}},\Omega):
{\boldsymbol v}_i \cdot {\boldsymbol n} =0  \text{ on } \Gamma_{p_i,N} \}$, $P_i = L^2(\Omega)$, and $P_i = L^2_0(\Omega)$
if $\Gamma_{{\boldsymbol u},D} = \Gamma=\partial \Omega$.
 
The weak formulation of system~\eqref{eq:7} reads as: 
Find $({\boldsymbol u}; {\boldsymbol v};{\boldsymbol p}) \in {\boldsymbol U}\times {\boldsymbol V}\times {\boldsymbol P}$, 
such that for any
$({\boldsymbol w}; {\boldsymbol z};{\boldsymbol q})  \in {\boldsymbol U}\times {\boldsymbol V}\times {\boldsymbol P}$ there hold
\begin{subequations}\label{MPET_weak}
\begin{eqnarray}
	({\boldmath \epsilon}({\boldsymbol u}), {\boldmath \epsilon} ({\boldsymbol w}))
	+ \lambda  ({\text{div\,}}{\boldsymbol u},{\text{div\,}} {\boldsymbol w}) -\sum_{i=1}^{n} (p_i,{\text{div\,}} {\boldsymbol w})
	&=& ({\boldsymbol f},{\boldsymbol w}),\\
	 (R^{-1}_i{\boldsymbol v}_i,{\boldsymbol z}_i) {-} (p_i,{\text{div\,}} {\boldsymbol z}_i) &=& 0, \quad\qquad\, i=1,\dots,n,\\
	-({\text{div\,}}{\boldsymbol u},q_i) - ({\text{div\,}}{\boldsymbol v}_i,q_i)  
	%-(\alpha_{p_i}+{\alpha}_{ii}) (p_i,q_i)
	+ \tilde\alpha_{ii} (p_i,q_i)
	+ \sum_{\substack{{j=1}\\j\neq i}}^{n}  \alpha_{ij}(p_j,q_i) &=& (g_i,q_i) ,\quad  i=1,\dots,n,
\end{eqnarray}
\end{subequations}
or,  equivalentely,
$\mathcal{A}(({\boldsymbol u};{\boldsymbol v};{\boldsymbol p}),({\boldsymbol w};{\boldsymbol z};{\boldsymbol q}))=
F({\boldsymbol w};{\boldsymbol z};{\boldsymbol q})$ for $({\boldsymbol w};{\boldsymbol z};{\boldsymbol q})
\in {\boldsymbol U}\times {\boldsymbol V}\times {\boldsymbol P}$, 
where $F({\boldsymbol w};{\boldsymbol z};{\boldsymbol q})= ({\boldsymbol f},{\boldsymbol w})+\sum\limits_{i=1}^n(g_i,q_i)$
and
 \begin{align*}
&\mathcal{A}((\bu;\bv; \bm p),(\bw;\bz;\bm q))= (\ep(\bu), \ep(\bw))+ \lambda  (\divv\bu,\divv \bw) -\sum_{i=1}^{n} (p_i,\divv \bw) 
+\sum_{i=1}^{n} (R^{-1}_i\bv_i,\bz_i) \\&- \sum_{i=1}^{n}(p_i,\divv \bz_i)
 -\sum_{i=1}^{n}(\divv\bu,q_i)-\sum_{i=1}^{n}(\divv\bv_i,q_i)  -\sum_{i=1}^{n}(\alpha_{p_i}+\alpha_{ii})(p_i,q_i) 
+ \sum_{i=1}^{n}\sum_{\substack{j=1\\j\neq i}}^{n} \alpha_{ji} (p_j,q_i) \\
& = (\ep(\bu), \ep(\bw))+ \lambda  (\divv\bu,\divv \bw) -\sum_{i=1}^{n} (p_i,\divv \bw) 
+\sum_{i=1}^{n} (R^{-1}_i\bv_i,\bz_i) - \sum_{i=1}^{n}(p_i,\divv \bz_i) \\&
 -\sum_{i=1}^{n}(\divv\bu,q_i) - ({\text{Div\,}} {\boldsymbol v},\boldsymbol q)-((\Lambda_1+\Lambda_2)\boldsymbol p,\boldsymbol q).
 \end{align*}
Here we have denoted $({\text{Div\,}} {\boldsymbol v})^T:= ({\text{div\,}} {\boldsymbol v}_1, \ldots, {\text{div\,}} {\boldsymbol v}_n)$ and
$$
\begin{array}{ll}
\Lambda _{1}:=  
\begin{bmatrix}
\alpha_{11} & -\alpha_{12} & \dots &-\alpha_{1n}  \\
-\alpha_{21} & \alpha_{22} & \dots &-\alpha_{2n}  \\
\vdots & \vdots & \ddots & \vdots  \\
-\alpha_{n1} & -\alpha_{n2} & \dots &\alpha_{nn}  
\end{bmatrix},&
\Lambda _2:=
\begin{bmatrix}
{\alpha_{p_1}} &0&\dots &0\\
0&{\alpha_{p_2}}&\dots &0\\
\vdots&\vdots &\ddots&\vdots\\
0&0&\dots&  {\alpha_{p_n}}
\end{bmatrix}.
\end{array}
$$
Furthermore, define $R^{-1} := \max\{ R_1^{-1} , \dots,R_n^{-1} \}$, $\lambda_0:=\max\{1, \lambda\}$ and the $n\times n$ matrices
$$
\begin{array}{ll}
 \Lambda_{3}:=
\begin{bmatrix}
R &0&\dots &0\\
0&R&\dots &0\\
\vdots& \vdots&\ddots&\vdots\\
0&0&\dots & R
\end{bmatrix},&
 \Lambda_{4}:=
\begin{bmatrix}
\frac{1}{\lambda_0} &\dots& {\dots} & \frac{1}{\lambda_0}\\
\vdots& & &\vdots\\
\vdots& & &\vdots\\
\frac{1}{\lambda_0}&\dots & {\dots}&\frac{1}{\lambda_0}
\end{bmatrix}
%, 
%\qquad \Lambda  := {\sum_{i=1}^{4}\Lambda _i},
\end{array}
$$
that are used later in the convergence analysis of the fixed-stress coupling iteration.
It is easy to show that $\Lambda_i$ are symmetric positive semidefinite (SPSD) for $i=1,2,4$ 
while $\Lambda_3$ is symmetric positive definite (SPD). 

Moreover, we denote $$\Lambda  := {\sum\limits_{i=1}^{4}\Lambda _i}$$ which obviously is an SPD matrix and therefore, can be used to 
define the parameter-matrix-dependent  norms 
$\|\cdot\|_{\boldsymbol U}$, $\|\cdot\|_{\boldsymbol V}$, $\|\cdot\|_{\boldsymbol P}$ induced by the inner products:
\begin{subequations}\label{norms}
\begin{align}
({\boldsymbol u},{\boldsymbol w})_{\boldsymbol U}&= ({\boldmath \epsilon}({\boldsymbol u}),{\boldmath \epsilon}({\boldsymbol w}))
+ \lambda({\text{div\,}} {\boldsymbol u},{\text{div\,}} {\boldsymbol w}),\label{norms-u}\\
({\boldsymbol v}, {\boldsymbol z})_{\boldsymbol V}&= \sum_{i=1}^n(R_i^{-1}{\boldsymbol v}_i,{\boldsymbol z}_i)
+ (\Lambda^{-1}  {\text{Div\,}} {\boldsymbol v} ,{\text{Div\,}} {\boldsymbol z}),\label{norms-v}\\
({\boldsymbol p},{\boldsymbol q})_{\boldsymbol P}&= (\Lambda {\boldsymbol p},{\boldsymbol q})\label{norms-p}.
\end{align}
\end{subequations}
As shown in~\cite{Hong2018conservativeMPET}, these norms are crucial to show the parameter-robust stability of the MPET system. 
\subsection{Stability properties}

The following inf-sup conditions for the spaces ${\boldsymbol U}, {\boldsymbol V}, {\boldsymbol P}$ are assumed to be fulfilled in the analysis presented in this paper:   
\begin{eqnarray}
\inf_{q\in P_i} \sup_{{\boldsymbol v} \in {\boldsymbol V}_i} \frac{({\rm div} {\boldsymbol v}, q)}{\|{\boldsymbol v}\|_{\rm div}\|q\|}
\geq \beta_d, \quad i=1,\dots,n, \label{eq:inf_sup_d} \\
\inf_{(q_1,\cdots,q_n)\in P_1\times\cdots\times P_n}
\sup_{{\boldsymbol u}\in {\boldsymbol U}}
\frac{\left({\rm div} {\boldsymbol u}, \sum\limits_{i=1}^n q_i\right)}{\|{\boldsymbol u}\|_1\left\|\sum\limits_{i=1}^n q_i\right\|} \geq \beta_s  
\label{eq:inf_sup_s}
\end{eqnarray}
for some constants $\beta_d > 0$ and $\beta_s > 0$, see \cite{ Brezzi1974existence,Boffi2013mixed}.  
Then from \cite{Hong2018conservativeMPET}, we know that the MPET problem~\eqref{MPET_weak} is uniformly well-posed, namely 
the three assertions in Theorem~\ref{continu_stability} hold:
\begin{theorem}\label{continu_stability}
~
\begin{itemize}
\item[(i)] 
There exists a positive constant $C_{b}$ independent of  the parameters $\lambda$, $R_i^{-1}$, $\alpha_{p_i}$, ${\alpha}_{ij}$, 
$i,j \in \{1,\dots,n\}$ and the network scale~$n$ such that the inequality 
\begin{equation*}
|\mathcal A((\bu;\bv;\bm p),(\bw;\bz;\bm q))|\le C_b (\|\bm u|_{\bm U}+\|\bm v\|_{\bm V}
+\|\bm p\|_{\bm P})  (\|\bm w\|_{\bm U}+\|\bm z\|_{\bm V}+\|\bm q\|_{\bm P})
\end{equation*}
holds true for any $(\bm u; \bm v;\bm p)\in \boldsymbol U\times \boldsymbol V\times \boldsymbol P , (\bm w; \bm z; \bm q)\in \boldsymbol U\times \boldsymbol V\times \boldsymbol P$.
\item[(ii)]  There is a constant $\omega> 0$ independent of the 
parameters $\lambda,R_i^{-1}, \alpha_{p_i}, {\alpha}_{ij}$, $i,j \in \{1,\dots,n\}$ and the number of networks $n$ such that 
\begin{align}\label{stability} 
\inf_{({\boldsymbol u};{\boldsymbol v};{\boldsymbol p})\in {\boldsymbol X}}
\sup_{({\boldsymbol w};{\boldsymbol z};{\boldsymbol q})\in {\boldsymbol X}}
\frac{\mathcal{A}(({\boldsymbol u};{\boldsymbol v};{\boldsymbol p}),({\boldsymbol w};{\boldsymbol z};{\boldsymbol q}))}
{( \|{\boldsymbol u}\|_{{\boldsymbol U}}
+\|{\boldsymbol v}\|_{{\boldsymbol V}}+ \|{\boldsymbol p}\|_{{\boldsymbol P}})( \|{\boldsymbol w}\|_{{\boldsymbol U}}
+ \|{\boldsymbol z}\|_{{\boldsymbol V}}+ \|{\boldsymbol q}\|_{{\boldsymbol P}})} \geq \omega,
\end{align}
where ${\boldsymbol X}:={\boldsymbol U} \times {\boldsymbol V}\times {\boldsymbol P}$.
\item[(iii)]  The MPET system~\eqref{MPET_weak} has a unique solution $(\boldsymbol u; \boldsymbol v;\bp)\in \boldsymbol U\times \boldsymbol V\times \boldsymbol P$ and the following stability estimate holds:
%Let $(\boldsymbol u; \boldsymbol v;\bp)\in \boldsymbol U\times \boldsymbol V\times P$ be the solution
%of~\eqref{MPET_weak}. Then there holds the estimate
\begin{equation}\label{MPET_stability_estimate}
\|\boldsymbol u\|_{\boldsymbol U}+\|\boldsymbol  v\|_{\boldsymbol V}+\|\boldsymbol p\|_{\boldsymbol P}\leq C_1 (\|\boldsymbol f\|_{\boldsymbol U^*}
+\|\bm g\|_{\bm P^*}),
\end{equation} 
where $C_1$ is a positive constant independent of the parameters $\lambda,R_i^{-1}, \alpha_{p_i}, {\alpha}_{ij}, i,j \in \{1,\dots,n\}$ and the network scale $n$, 
and
$\|\boldsymbol f\|_{\boldsymbol U^*}=
\sup\limits_{\boldsymbol w\in \boldsymbol U}\frac{(\boldsymbol f, \boldsymbol w)}{\|\boldsymbol w\|_{\boldsymbol U}},~~ \|\boldsymbol g\|_{\boldsymbol P^*}=
\sup\limits_{\boldsymbol q\in \boldsymbol P}\frac{(\boldsymbol g,\boldsymbol q)}
{\|\boldsymbol q\|_{\boldsymbol P}}=\|\Lambda^{-\frac{1}{2}} \boldsymbol g\|.
%%\quad \boldsymbol g^T=(g_1,\cdots,g_n).
$
\end{itemize} 
\end{theorem}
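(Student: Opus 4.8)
The plan is to obtain part (iii) from parts (i) and (ii). On the reflexive Hilbert space $\boldsymbol X=\bU\times\bV\times\bP$ equipped with the composite norm $\|\bu\|_{\bU}+\|\bv\|_{\bV}+\|\bp\|_{\bP}$, boundedness (i) together with the inf--sup condition (ii) --- and its transposed counterpart, which the construction below reproduces verbatim once the sign of the pressure component of the test function is flipped --- make $\mathcal A$ an isomorphism $\boldsymbol X\to\boldsymbol X^{\ast}$ by the Banach--Necas--Babuska theorem; unique solvability of \eqref{MPET_weak} and the bound \eqref{MPET_stability_estimate} with $C_1=\omega^{-1}$ then follow. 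So the task reduces to proving (i) and (ii) with constants independent of $\lambda,R_i^{-1},\alpha_{p_i},\alpha_{ij}$ and of the network size $n$.

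For (i) I would expand $\mathcal A$ into its individual terms and estimate each by Cauchy--Schwarz; only the coupling terms require care. Writing $\bm 1=(1,\dots,1)^{T}$, one has $\|\sum_i p_i\|^{2}=\lambda_0(\Lambda_4\bp,\bp)\le\lambda_0\|\bp\|_{\bP}^{2}$ because $\Lambda\succeq\Lambda_4=\lambda_0^{-1}\bm 1\bm 1^{T}$, and Korn's inequality gives $\sqrt{\lambda_0}\,\|\divv\bw\|\lesssim\|\bw\|_{\bU}$ (trivially if $\lambda\ge1$, with the Korn constant if $\lambda<1$), which together control $|\sum_i(p_i,\divv\bw)|$ and $|\sum_i(\divv\bu,q_i)|$. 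The term $\sum_i(p_i,\divv\bz_i)=(\bp,\Divv\bz)=(\Lambda^{1/2}\bp,\Lambda^{-1/2}\Divv\bz)$ is bounded by $\|\bp\|_{\bP}\|\bz\|_{\bV}$ directly from \eqref{norms-v}, and $|((\Lambda_1+\Lambda_2)\bp,\bq)|\le\|\bp\|_{\bP}\|\bq\|_{\bP}$ since $0\preceq\Lambda_1+\Lambda_2\preceq\Lambda$. Summation yields (i) with $C_b$ depending only on $d$ and the Korn constant.

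For (ii), given $(\bu;\bv;\bp)$ I would take a small perturbation of the canonical test function. The choice $(\bw;\bz;\bq)=(\bu;\bv;-\bp)$ cancels all skew couplings in pairs and gives $\mathcal A((\bu;\bv;\bp),(\bu;\bv;-\bp))=\|\bu\|_{\bU}^{2}+\sum_i(R_i^{-1}\bv_i,\bv_i)+((\Lambda_1+\Lambda_2)\bp,\bp)$, which already controls $\|\bu\|_{\bU}^{2}$, the mass part of $\|\bv\|_{\bV}^{2}$, and the $(\Lambda_1+\Lambda_2)$-part of $\|\bp\|_{\bP}^{2}$; the remaining pieces $(\Lambda^{-1}\Divv\bv,\Divv\bv)$, $(\Lambda_4\bp,\bp)$ and $(\Lambda_3\bp,\bp)$ are produced by three corrections. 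First, replacing $\bq=-\bp$ by $\bq=-\bp-\eta\,\Lambda^{-1}\Divv\bv$ turns $-(\Divv\bv,\bq)$ into $(\Divv\bv,\bp)+\eta(\Lambda^{-1}\Divv\bv,\Divv\bv)$, while $\|\Lambda^{-1}\Divv\bv\|_{\bP}=(\Lambda^{-1}\Divv\bv,\Divv\bv)^{1/2}\le\|\bv\|_{\bV}$. Second, by \eqref{eq:inf_sup_s} choose $\bw^{p}\in\bU$ with $\|\bw^{p}\|_{1}=\|\sum_i p_i\|$ and $(\divv\bw^{p},\sum_i p_i)\ge\beta_s\|\sum_i p_i\|^{2}$, and replace $\bw=\bu$ by $\bw=\bu-\theta\lambda_0^{-1}\bw^{p}$: then $-\sum_i(p_i,\divv\bw)$ gains $\theta\lambda_0^{-1}(\sum_i p_i,\divv\bw^{p})\ge\theta\beta_s(\Lambda_4\bp,\bp)$, while $\|\lambda_0^{-1}\bw^{p}\|_{\bU}^{2}\lesssim\lambda_0^{-1}\|\sum_i p_i\|^{2}=(\Lambda_4\bp,\bp)$ by $1+\lambda\le2\lambda_0$. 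Third, by \eqref{eq:inf_sup_d} choose, for each $i$, $\bz_i^{p}\in\bV_i$ with $\|\bz_i^{p}\|_{\rm div}=\|p_i\|$ and $(\divv\bz_i^{p},p_i)\ge\beta_d\|p_i\|^{2}$, and replace $\bv_i$ by $\bv_i-\zeta R\,\bz_i^{p}$: then $-\sum_i(p_i,\divv\bz_i)$ gains $\zeta R\sum_i(p_i,\divv\bz_i^{p})\ge\zeta\beta_d(\Lambda_3\bp,\bp)$, and, using $R_i^{-1}\le R^{-1}$ and $\Lambda\succeq\Lambda_3=R\,I$ (so $\|\Lambda^{-1}\|_2\le R^{-1}$), the $\bV$-norm of this correction is $\lesssim\zeta(\Lambda_3\bp,\bp)^{1/2}\le\zeta\|\bp\|_{\bP}$. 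Since $\mathcal A$ has no $\bw$--$\bz$, $\bw$--$\bq$ or $\bz$--$\bq$ term, these corrections act on three disjoint groups of terms, and every parasitic cross term they produce has the form (small factor)$\,\cdot\,$(already controlled quantity)$^{1/2}\,\cdot\,$(already controlled quantity)$^{1/2}$, so Young's inequality (an auxiliary splitting parameter fixed first, then $\eta,\theta,\zeta$ taken small in terms of $\beta_d,\beta_s,d$) absorbs them, leaving $\mathcal A((\bu;\bv;\bp),(\bw;\bz;\bq))\gtrsim\|\bu\|_{\bU}^{2}+\|\bv\|_{\bV}^{2}+\|\bp\|_{\bP}^{2}$ with $\|\bw\|_{\bU}+\|\bz\|_{\bV}+\|\bq\|_{\bP}\lesssim\|\bu\|_{\bU}+\|\bv\|_{\bV}+\|\bp\|_{\bP}$, which is (ii).

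The hard part is the simultaneous balancing in the three correction steps: each must recover \emph{exactly} one missing norm component while staying bounded, in the composite norm, by a constant free of the physical parameters and of $n$. This robustness hinges on the spectral orderings $\Lambda\succeq\Lambda_3$ and $\Lambda\succeq\Lambda_4$, on $R_i^{-1}\le R^{-1}$ and $1+\lambda\le2\lambda_0$, and --- for the $n$-independence --- on the fact that $\Lambda_4$ was taken as the rank-one matrix $\lambda_0^{-1}\bm 1\bm 1^{T}$: this makes the pressure coupling felt by the displacement exactly $\|\sum_i p_i\|^{2}$ and, through $\Lambda\succeq\Lambda_4$, bounds every occurrence of $\bm 1^{T}(\cdot)$ by $\sqrt{\lambda_0}$ times a $\Lambda$-weighted norm, precisely matching what the $n$-free ``macroscopic'' inf--sup condition \eqref{eq:inf_sup_s} provides.
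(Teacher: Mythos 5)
The paper does not prove Theorem~\ref{continu_stability} at all: the result is quoted from~\cite{Hong2018conservativeMPET} (``Then from \cite{Hong2018conservativeMPET}, we know that the MPET problem~\eqref{MPET_weak} is uniformly well-posed\dots''), so there is no in-paper proof to compare against. Your argument is a correct and complete reconstruction of the standard Babu\v{s}ka--Brezzi-type proof for such parameter-robust saddle-point systems, and it is essentially the approach taken in the cited reference: establish boundedness termwise, then prove the inf--sup bound by starting from the ``near-diagonal'' test function $(\bu;\bv;-\bp)$, which yields $\|\bu\|_{\bU}^2+\sum_i(R_i^{-1}\bv_i,\bv_i)+((\Lambda_1+\Lambda_2)\bp,\bp)$, and adding three small corrections that use $\Lambda\succeq\Lambda_3$, $\Lambda\succeq\Lambda_4$ and the two inf--sup conditions \eqref{eq:inf_sup_d}--\eqref{eq:inf_sup_s} to recover the three missing norm contributions $(\Lambda^{-1}\Divv\bv,\Divv\bv)$, $(\Lambda_4\bp,\bp)$ and $(\Lambda_3\bp,\bp)$. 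Your observation that $\mathcal A$ has no $\bw$--$\bz$, $\bw$--$\bq$, $\bz$--$\bq$ couplings, so the three perturbations act on disjoint groups of terms, is precisely what makes the Young-inequality bookkeeping close.

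Two small remarks. First, $\|\divv\bw\|\le\sqrt{d}\,\|\ep(\bw)\|$ is a pointwise trace bound (the trace of $\ep(\bw)$ \emph{is} $\divv\bw$), so Korn's inequality is not actually needed for the boundedness part; the constant there depends only on $d$. Second, since $\mathcal A$ is symmetric, the transposed inf--sup condition is identical to \eqref{stability} and no separate sign-flipped construction is required; the appeal to Banach--Ne\v{c}as--Babu\v{s}ka simplifies accordingly. Neither point is a gap.
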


\subsection{A norm equivalent preconditioner}

Consider the block-diagonal operator
\begin{equation}\label{Preconditioner:B}\nonumber
\mathscr{B}:=\left[\begin{array}{ccc}
\mathscr{B}_{\bm u}^{-1} & \bm 0 & \bm 0 \\
\bm 0 & \mathscr{B}_{\bm v}^{-1}& \bm 0 \\
\bm 0& \bm 0& \mathscr{B}_{\bm p}^{-1}  
\end{array}
\right], \quad \text{where} \quad \mathscr{B}_{\bm u}
=-\divv \ep-\lambda \nabla \divv,
\end{equation}
%where 
%$$
%{\color{viol}\mathscr{B}}_{\bm u}
%=-\divv \ep-\lambda \nabla \divv,
%$$
\begin{align*}
\mathscr{B}_{\bm v}&=  
\begin{bmatrix}
R_1^{-1}I&0  & \dots &0  \\
0 &R_2^{-1} I& \dots &0 \\
\vdots &\vdots  & \ddots & \vdots  \\
0& 0 & \dots &R_n^{-1}I
\end{bmatrix}
-
\begin{bmatrix}
\tilde{\gamma}_{11}\nabla{\rm div} &\tilde{\gamma}_{12}\nabla{\rm div} & \dots &\tilde{\gamma}_{1n}\nabla{\rm div}\\
\tilde{\gamma}_{21}\nabla{\rm div} &\tilde{\gamma}_{22}\nabla{\rm div} & \dots &\tilde{\gamma}_{2n}\nabla{\rm div}\\
\vdots &\vdots  & \ddots & \vdots  \\
\tilde{\gamma}_{n1}\nabla{\rm div} &\tilde{\gamma}_{n2}\nabla{\rm div} & \dots &\tilde{\gamma}_{nn}\nabla{\rm div}
\end{bmatrix},
\end{align*}
and
\begin{align*}
\mathscr{B}_{\bm p}&=
\begin{bmatrix}
\gamma_{11}I&\gamma_{12}I& \dots &\gamma_{1n}I\\
\gamma_{21}I&\gamma_{22}I& \dots &\gamma_{2n}I\\
\vdots&\vdots  &\ddots&\vdots\\
\gamma_{n1}I&\gamma_{n2}I& \dots &\gamma_{nn}I\\
\end{bmatrix}.
\end{align*}
Here, $\gamma_{ij}$, $\tilde{\gamma}_{ij}$, $i,j=1,\ldots,n$ are the entries of $\Lambda$ and $\Lambda^{-1}$, respectively.

As substantiated in~\cite{Hong2018conservativeMPET}, the stability results for the operator $\mathscr{A}$ imply that 
the operator~$\mathscr{B}$ %defined in~\eqref{Preconditioner:B}
is a uniform norm-equivalent (canonical) block-diagonal preconditioner % in~\eqref{operator:A},
that is robust with respect to all model and discretization parameters. 

Note that the existence of this canonical uniform block-diagonal preconditioner can be transferred to the discrete level  as long as 
discrete inf-sup conditions analogous to~\eqref{eq:inf_sup_d} and~\eqref{eq:inf_sup_s} are satisfied, cf.~\cite{Hong2018conservativeMPET}.

\section{Fixed-stress method for MPET model}\label{fixed-stress}

In the proposed fixed-stress split iterative coupling scheme for the MPET system, and as for Biot's equations,  we first solve
the flow and then the mechanics problem where, in order to avoid instabilities, a stabilization term is added to the flow 
equation. Note that generalizing the fixed-stress iteration from the Biot to the (flux-based) MPET model is not straightforward due 
to the involvement of $n$ pressures $p_i$ and $n$ fluxes $\bv_i$. 
Our formulation suggests a stabilization that employs the sum of the pressures
%, a quantity related to the total pressure, 
%which later shows itself to be vital for the convergence properties of the scheme. 
which later shows itself to be vital in the convergence analysis 
of the scheme. 

In order to elucidate our approach, we present the fixed-stress splitting scheme for the continuous problem first. Let $\bu^{k}$, $\bv_i^{k}$ and $p_i^{k}$ 
denote the $k$-th fixed-stress iterates for $\bu$, $\bv_i$ and $p_i$ respectively,  $i=1,\ldots,n$. The single rate fixed-stress coupling iteration is 
given by the following algorithm:
%.

\vspace{1ex}
\begin{algorithm}[H]
\textbf{Step a}: Given $\bu^{m}$, we solve for $\bv_i^{m+1}$ and $p_i^{m+1}$ %$1 \le i \le n$,
\begin{equation}\label{fixed_stress_eq1}
\begin{array}{l}
( - \divv\bv_i^{m+1},q_i) -((\alpha_{p_i} + {\alpha}_{ii}) p_i^{m+1},q_i)+\left(\displaystyle\sum_{\substack{{j=1}\\j\neq i}}^{n}{\alpha}_{ij} p_j^{m+1},q_i\right)
 -L\left( \displaystyle\sum_{j=1}^{n} p_j^{m+1},q_i\right)
\\[1ex] \hspace{2.8cm}= (g_i,q_i)-L \left(\displaystyle\sum_{j=1}^{n}p_j^{m},q_i\right)+(\divv \bu^{m},q_i), \quad 1 \le i \le n,
\end{array}
\end{equation}
and 
\begin{equation}\label{fixed_stress_eq2}
(R_i^{-1} \bv_i^{m+1},\bz_i)- (p_i^{m+1},\divv \bz_i) =\bo, \quad 1 \le i \le n.
\end{equation}
\\ \\
\noindent
\textbf{Step b}: Given $\bv_i^{m+1}$ and $p_i^{m+1}$, we solve for $\bu^{m+1}$ 
\begin{equation}\label{fixed_stress_eq3}
( \ep(\bu^{m+1}),\ep(\bw)) + \lambda (\divv \bu^{m+1},\divv \bw)  = (\bff,\bw) +\sum_{i=1}^{n} ( p_i^{m+1},\divv \bw).
\end{equation}
\caption{\textbf{: Fixed-stress coupling iteration for the MPET system}}\label{alg1}
\end{algorithm}

Our main result is formulated in terms of the following quantities:
\begin{subequations}\label{errors_cont_prob}
\begin{eqnarray}
\be_u^k & = & \bm u^k-\bm u\in \bU, \\
\be_{v_i}^k&=&\bm v_i^k-\bm v_i \in \bV_i, \quad i=1,\ldots,n, \\
e_{p_i}^k&=&p_i^k-p_i \in P_i, \quad i=1,\ldots,n, 
\end{eqnarray}
\end{subequations}
denoting the errors of the $k$-th iterates $\bm u^k$, $\bm v^k_i$, $p^k_i$, $i=1,\ldots,n$ generated by Algrorithm~\ref{alg1}. The error block-vectors $\be_v^k$ 
and $\be_p^k$ are defined by 
$(\be_{v}^k)^T = ((\be_{v_1}^k)^T,\ldots,(\be_{v_n}^k))^T$, $(\be_p^k)^T = (e_{p_1}^k,\ldots,e_{p_n}^k)$. 
Since $\bu$, $\bv_i$, $p_i$, $i=1,\ldots,n$ are the exact solutions of~\eqref{MPET_weak}, the error equations
\begin{subequations}
\begin{eqnarray}
\nonumber ( - {\rm Div} \bm e_v^{m+1},\boldsymbol q) -((\Lambda_1+\Lambda_2) \boldsymbol e_p^{m+1},\boldsymbol q)-L\left(\displaystyle\sum_{i=1}^{n}e_{p_i}^{m+1},  \displaystyle\sum_{i=1}^{n} q_i\right)
%\\\\
&\hspace{-1ex} = \hspace{-1ex}& -L \left(\sum_{i=1}^{n} e_{p_i}^{m}, \displaystyle\sum_{i=1}^{n} q_i\right)
\\ \label{error:1} &\hspace{-1ex} \hspace{-1ex} &+\left(\divv \bm e_u^{m},\sum_{i=1}^{n} q_i\right),
\\ \label{error:2}
(R^{-1} \bm e_v^{m+1},\bz)- (\boldsymbol e_{p}^{m+1},{\rm Div} \bz)  &\hspace{-1ex}=\hspace{-1ex}&0,
\\
\label{error:3}
( \ep(\bm e_u^{m+1}),\ep(\bw)) + \lambda (\divv \bm e_u^{m+1},\divv \bw)   &\hspace{-1ex}=\hspace{-1ex}&\left(\displaystyle\sum_{i=1}^{n} e_{p_i}^{m+1},\divv \bw\right),
\end{eqnarray}
\end{subequations}
hold, the latter of which playing a key role in the presented convergence analysis.

Note that in the following
we do not make any further restrictive assumptions on the parameters in~\eqref{MPET_weak} but consider the general situation in which 
only~\eqref{parameter:range} needs to be satisfied. Useful for deriving and defining the tuning parameter $L$ is the constant $c_K$ in the estimate
\begin{equation}\label{ck}
\|\ep(\bw)\|\ge c_{K}\|\divv(\bw)\| \quad \text{for all }\bw \in \bU
\end{equation}
which is used for $\bw =\bm e_u^{m+1}-\bm e_u^{m}$ in the proof of the next Lemma.\footnote{The constant $c_K$ is related to Korn's inequality and, while in 
general not easy to bound tightly from below, can be estimated sufficiently in the discrete setting.}

We perform the convergence analysis in two steps. The first one is the proof of the following lemma.

\begin{lemma}\label{lemma1}
The errors $\be_u^{m+1}$, $\be_v^{m+1}$ and $\be_p^{m+1}$ of the $(m+1)$-st fixed-stress iterate generated by Algorithm~\ref{alg1} for
$L \ge \displaystyle\frac{1}{\lambda+c_K^2}$ 
satisfy the estimate
\begin{equation}\label{eq:lemma1}
\begin{array}{l}
\displaystyle \frac{1}{2}\big(\|\ep(\bm e_u^{m+1})\|^2+ \lambda \| \normalfont{\divv} \bm e_u^{m+1}\|^2\Big)+\|R^{-1/2}\bm e_v^{m+1}\|^2+\|(\Lambda_1+\Lambda_2)^{1/2}\boldsymbol e_{p}^{m+1}\|^2
 \\
%\hspace{0.8cm}
\qquad +\displaystyle \frac{L}{2} \left\| \displaystyle\sum_{i=1}^{n} e_{p_i}^{m+1}\right\|^2
\leq  \frac{L}{2}\left\|\sum_{i=1}^{n} e_{p_i}^{m}\right\|^2, \quad m=0,1,2,\ldots.
\end{array}
\end{equation}
%for $m=0,1,2,\ldots.$
\end{lemma}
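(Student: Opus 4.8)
The plan is to test the three error equations with the errors themselves at step $m+1$ and combine them so that the flux and coupling terms telescope. First I would take $\bw = \be_u^{m+1}$ in~\eqref{error:3}, obtaining $\|\ep(\be_u^{m+1})\|^2 + \lambda\|\divv \be_u^{m+1}\|^2 = (\sum_i e_{p_i}^{m+1}, \divv \be_u^{m+1})$; this identifies the source term on the right-hand side of~\eqref{error:1} in terms of the mechanics energy. Next I would take $\bz = \be_v^{m+1}$ in~\eqref{error:2} to get $\|R^{-1/2}\be_v^{m+1}\|^2 = (\be_p^{m+1}, \Divv \be_v^{m+1})$, and then $\bq = \be_p^{m+1}$ in~\eqref{error:1}. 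Adding these gives an identity in which the left side carries $\|R^{-1/2}\be_v^{m+1}\|^2 + \|(\Lambda_1+\Lambda_2)^{1/2}\be_p^{m+1}\|^2 + L\|\sum_i e_{p_i}^{m+1}\|^2$ and the right side is $-L(\sum_i e_{p_i}^m, \sum_i e_{p_i}^{m+1}) + (\divv \be_u^m, \sum_i e_{p_i}^{m+1})$. The term $(\divv\be_u^m, \sum_i e_{p_i}^{m+1})$ is inconvenient because it mixes iteration level $m$ with level $m+1$; the key trick is to rewrite $(\sum_i e_{p_i}^{m+1}, \divv \be_u^{m+1}) - (\sum_i e_{p_i}^{m+1},\divv \be_u^m) = (\sum_i e_{p_i}^{m+1}, \divv(\be_u^{m+1} - \be_u^m))$ and use the mechanics identity from~\eqref{error:3} to replace $(\sum_i e_{p_i}^{m+1},\divv\be_u^{m+1})$, leaving us needing to control $(\sum_i e_{p_i}^{m+1},\divv(\be_u^{m+1}-\be_u^m))$.

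The heart of the argument is therefore a Young/Cauchy--Schwarz estimate on this cross term that is tight enough to be absorbed. Here I would use the hypothesis $L \ge \frac{1}{\lambda + c_K^2}$: by~\eqref{ck} applied to $\bw = \be_u^{m+1} - \be_u^m$ we have $\|\divv(\be_u^{m+1}-\be_u^m)\|^2 \le \frac{1}{\lambda + c_K^2}(\|\ep(\be_u^{m+1}-\be_u^m)\|^2 + \lambda\|\divv(\be_u^{m+1}-\be_u^m)\|^2)$, i.e. the $\bU$-norm of the displacement increment dominates $(\lambda+c_K^2)\|\divv(\cdot)\|^2$. Splitting $(\sum_i e_{p_i}^{m+1},\divv(\be_u^{m+1}-\be_u^m)) \le \frac{L}{2}\|\sum_i e_{p_i}^{m+1}\|^2 + \frac{1}{2L}\|\divv(\be_u^{m+1}-\be_u^m)\|^2$ and bounding $\frac{1}{2L}\|\divv(\be_u^{m+1}-\be_u^m)\|^2 \le \frac{1}{2}(\|\ep(\be_u^{m+1}-\be_u^m)\|^2 + \lambda\|\divv(\be_u^{m+1}-\be_u^m)\|^2)$, I would then expand the squared increments and use the $m+1$ mechanics identity once more so that the $\frac12\|\ep(\be_u^{m+1})\|^2 + \frac{\lambda}{2}\|\divv\be_u^{m+1}\|^2$ terms survive on the left with coefficient $\tfrac12$ (matching~\eqref{eq:lemma1}), while the cross terms $-(\ep(\be_u^{m+1}),\ep(\be_u^m)) - \lambda(\divv\be_u^{m+1},\divv\be_u^m)$ and the pure-$m$ terms either cancel against something or can be dropped since they appear with a favorable sign. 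Finally, bounding $-L(\sum_i e_{p_i}^m,\sum_i e_{p_i}^{m+1}) \le \frac{L}{2}\|\sum_i e_{p_i}^m\|^2 + \frac{L}{2}\|\sum_i e_{p_i}^{m+1}\|^2$ and collecting everything produces~\eqref{eq:lemma1}, with the extra $\frac{L}{2}\|\sum_i e_{p_i}^{m+1}\|^2$ on the left coming from the difference of the two coupling estimates.

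I expect the main obstacle to be the careful bookkeeping of the displacement increment: one must decide exactly when to introduce $\be_u^{m+1} - \be_u^m$, expand it, and re-substitute the mechanics identity~\eqref{error:3} so that (a) the surviving displacement terms appear with precisely coefficient $\tfrac12$ and not something smaller, and (b) all mixed-level and pure-level-$m$ contributions are shown to be either nonpositive or exactly cancelling. The choice $L \ge \frac{1}{\lambda+c_K^2}$ is dictated exactly by the requirement that $\frac{1}{2L}\|\divv(\be_u^{m+1}-\be_u^m)\|^2$ be absorbable into $\frac12\|\be_u^{m+1}-\be_u^m\|_{\bU}^2$; I would highlight that this is the only place the hypothesis on $L$ enters. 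The flux and reaction terms $\|R^{-1/2}\be_v^{m+1}\|^2$ and $\|(\Lambda_1+\Lambda_2)^{1/2}\be_p^{m+1}\|^2$ come along essentially for free once~\eqref{error:1} and~\eqref{error:2} are tested as above, since $\Lambda_1 + \Lambda_2$ is SPSD so the quadratic form $((\Lambda_1+\Lambda_2)\be_p^{m+1},\be_p^{m+1})$ is nonnegative and lands on the correct side.
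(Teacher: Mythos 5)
Your overall plan (test the three error equations with the level-$(m{+}1)$ errors, identify the awkward $(\divv \be_u^m,\sum_i e_{p_i}^{m+1})$ source with the mechanics energy so that a $\divv(\be_u^{m+1}-\be_u^m)$ cross term emerges, and use $L\ge 1/(\lambda+c_K^2)$ together with Korn's inequality to absorb it) is the same as the paper's. However, the bookkeeping in your second step does not close. After combining the three tested equations, the left side carries $L\,\bigl\|\sum_i e_{p_i}^{m+1}\bigr\|^2$ and the right side is $L\bigl(\sum_i e_{p_i}^m,\sum_i e_{p_i}^{m+1}\bigr) + \bigl(\divv(\be_u^{m+1}-\be_u^m),\sum_i e_{p_i}^{m+1}\bigr)$. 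If you Young the first term with weights $\tfrac{L}{2},\tfrac{L}{2}$ and the second with weights $\tfrac{L}{2},\tfrac{1}{2L}$ as you propose, the two resulting $\tfrac{L}{2}\bigl\|\sum_i e_{p_i}^{m+1}\bigr\|^2$ pieces consume the \emph{entire} $L\,\bigl\|\sum_i e_{p_i}^{m+1}\bigr\|^2$ on the left, leaving coefficient $0$ rather than the $\tfrac{L}{2}$ required by~\eqref{eq:lemma1}. The claim that "the extra $\tfrac{L}{2}\|\sum_i e_{p_i}^{m+1}\|^2$ on the left comes from the difference of the two coupling estimates" is therefore not correct. Moreover, your plan to "expand the squared increment" $\frac12\|\be_u^{m+1}-\be_u^m\|_{\bU}^2$ leaves you with $-(\be_u^{m+1},\be_u^m)_{\bU}+\tfrac12\|\be_u^m\|_{\bU}^2$ on the right, which has no definite sign (take $\be_u^{m+1}=\bo$ to see it can be strictly positive), so it can neither be dropped nor be made to cancel.

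The paper avoids both problems by two different moves. First, instead of Young's inequality on the pressure cross term, it applies the exact polarization identity $(a-b,a)=\tfrac12\bigl(\|a-b\|^2+\|a\|^2-\|b\|^2\bigr)$ to $L\bigl(\sum_i(e_{p_i}^{m+1}-e_{p_i}^m),\sum_i e_{p_i}^{m+1}\bigr)$; this keeps $\tfrac{L}{2}\bigl\|\sum_i e_{p_i}^{m+1}\bigr\|^2$ intact on the left and, crucially, also produces the absorbing slack $\tfrac{L}{2}\bigl\|\sum_i(e_{p_i}^{m+1}-e_{p_i}^m)\bigr\|^2$ there. Second, the divergence cross term is first rewritten via~\eqref{error:3} as $(\ep(\be_u^{m+1}),\ep(\be_u^{m+1}-\be_u^m))+\lambda(\divv\be_u^{m+1},\divv(\be_u^{m+1}-\be_u^m))$ and Young'd with balanced weights $\tfrac12,\tfrac12$ so that $\tfrac12\|\be_u^{m+1}\|_{\bU}^2$ is absorbed directly into the left side; the leftover $\tfrac12\|\be_u^{m+1}-\be_u^m\|_{\bU}^2$ is then bounded \emph{not} by expanding the square but by a direct argument: subtract two consecutive instances of~\eqref{error:3}, test with $\bw=\be_u^{m+1}-\be_u^m$, apply Cauchy--Schwarz, then Korn~\eqref{ck}, to get $\|\be_u^{m+1}-\be_u^m\|_{\bU}^2\le\tfrac{1}{\lambda+c_K^2}\bigl\|\sum_i(e_{p_i}^{m+1}-e_{p_i}^m)\bigr\|^2\le L\bigl\|\sum_i(e_{p_i}^{m+1}-e_{p_i}^m)\bigr\|^2$, which cancels exactly against the polarization slack. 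Your proof would need both of these devices to close; as written it has a genuine gap.
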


\begin{proof}
Setting $\bm z=\bm e_v^{m+1}, \boldsymbol q=-\boldsymbol e_{p}^{m+1}, \bm w=\bm e_{u}^{m+1}$ in~\eqref{error:1}--\eqref{error:3}, it follows that 
\begin{equation}\label{eq_lemma}
\begin{array}{l}
\|\ep(\bm e_u^{m+1})\|^{2}+ \lambda \|\divv \bm e_u^{m+1}\|^{2}+\|R^{-1/2}\bm e_v^{m+1}\|^{2}+\|(\Lambda_1+\Lambda_2)^{1/2}\boldsymbol e_{p}^{m+1}\|^{2}
\\\\ \qquad+L\left(\displaystyle\sum_{i=1}^{n} (e_{p_i}^{m+1}-e_{p_i}^{m}), \sum_{i=1}^{n} e_{p_i}^{m+1}\right)=\left(\divv(\bm e_u^{m+1}-\bm e_u^{m}),\displaystyle \sum_{i=1}^{n} e_{p_i}^{m+1}\right).
\end{array}
\end{equation}
Using the identity 
$$
\left(\sum_{i=1}^{n} (e_{p_i}^{m+1}-e_{p_i}^{m}), \sum_{i=1}^{n} e_{p_i}^{m+1}\right)=\frac{1}{2}\left(\left\|\sum_{i=1}^{n} e_{p_i}^{m+1}-\sum_{i=1}^{n} e_{p_i}^{m}\right\|^{2}+
\left\|\sum_{i=1}^{n} e_{p_i}^{m+1}\right\|^{2}-\left\|\sum_{i=1}^{n} e_{p_i}^{m}\right\|^{2}\right),
$$
equation~\eqref{eq_lemma} can be rewritten as 
\begin{equation}\label{star}
\begin{array}{l}
\|\ep(\bm e_u^{m+1})\|^{2}+ \lambda \|\divv \bm e_u^{m+1}\|^{2}+\|R^{-1/2}\bm e_v^{m+1}\|^{2}+\|(\Lambda_1+\Lambda_2)^{1/2}\boldsymbol e_{p}^{m+1}\|^{2}
\\
\qquad +\displaystyle\frac{L}{2} \left\|\displaystyle\sum_{i=1}^{n} e_{p_i}^{m+1}\right\|^{2} +\displaystyle\frac{L}{2}\left\|\displaystyle\sum_{i=1}^{n} e_{p_i}^{m+1}-\displaystyle\sum_{i=1}^{n} e_{p_i}^{m}\right\|^{2}
\\
=\displaystyle\frac{L}{2}\left \|\displaystyle\sum_{i=1}^{n} e_{p_i}^{m}\right \|^{2}
+\left(\divv(\bm e_u^{m+1}-\bm e_u^{m}), \displaystyle\sum_{i=1}^{n} e_{p_i}^{m+1}\right).
\end{array}
\end{equation}
Now, taking $\bm w=\bm e_u^{m+1}-\bm e_u^{m}$ in~\eqref{error:3} we obtain 
\begin{equation}\label{starstar}
\left(\divv(\bm e_u^{m+1}-\bm e_u^{m}), \displaystyle\sum_{i=1}^{n} e_{p_i}^{m+1}\right)=( \ep(\bm e_u^{m+1}),\ep(\bm e_u^{m+1}-\bm e_u^{m})) + \lambda (\divv \bm e_u^{m+1},\divv (\bm e_u^{m+1}-\bm e_u^{m})), 
\end{equation}
and, substituting~\eqref{starstar} in~\eqref{star}, conclude that 
$$
\begin{array}{l}
\|\ep(\bm e_u^{m+1})\|^{2}+ \lambda \|\divv \bm e_u^{m+1}\|^{2}+\|R^{-1/2}\bm e_v^{m+1}\|^{2}+\|(\Lambda_1+\Lambda_2)^{1/2}\boldsymbol e_{p}^{m+1}\|^{2}
\\
\qquad +\displaystyle\frac{L}{2} \left\|\displaystyle\sum_{i=1}^{n} e_{p_i}^{m+1}\right\|^{2}+\displaystyle\frac{L}{2}\left\|\displaystyle\sum_{i=1}^{n} e_{p_i}^{m+1}-\displaystyle\sum_{i=1}^{n} e_{p_i}^{m}\right\|^{2}
\\
=\displaystyle\frac{L}{2}\left \|\displaystyle\sum_{i=1}^{n} e_{p_i}^{m}\right\|^{2}+( \ep(\bm e_u^{m+1}),\ep(\bm e_u^{m+1}-\bm e_u^{m})) 
+ \lambda (\divv \bm e_u^{m+1},\divv (\bm e_u^{m+1}-\bm e_u^{m}))
\\
 \leq \displaystyle \frac{L}{2}\left\|\displaystyle\sum_{i=1}^{n} e_{p_i}^{m}\right\|^{2}+\frac{1}{2}\Big(\|\ep(\bm e_u^{m+1})\|^{2}
 +\lambda\|\divv \bm e_u^{m+1}\|^{2}\Big)
 \\
 \qquad +\displaystyle\frac{1}{2}\Big(\|\ep(\bm e_u^{m+1}-\bm e_u^{m})\|^{2} + \lambda\|\divv (\bm e_u^{m+1}-\bm e_u^{m})\|^{2}\Big) .
\end{array}
$$
The latter inequality can be expressed equivalently in the form 
\begin{equation}\label{starstarstar}
\begin{array}{l}
\displaystyle \frac{1}{2}\big(\|\ep(\bm e_u^{m+1})\|^{2}+ \lambda \|\divv \bm e_u^{m+1}\|^{2}\Big)+\|R^{-1/2}\bm e_v^{m+1}\|^{2}+\|(\Lambda_1+\Lambda_2)^{1/2}\boldsymbol e_{p}^{m+1}\|^{2}
\\ 
\qquad+\displaystyle \frac{L}{2} \left \|\displaystyle\sum_{i=1}^{n} e_{p_i}^{m+1}\right \|^{2}+\displaystyle\frac{L}{2}\left \|\displaystyle\sum_{i=1}^{n} e_{p_i}^{m+1}-\displaystyle\sum_{i=1}^{n} e_{p_i}^{m}\right \|^{2}
\\
\leq \displaystyle \frac{L}{2}\left \|\displaystyle\sum_{i=1}^{n} e_{p_i}^{m}\right \|^{2}+\frac{1}{2}\Big(\|\ep(\bm e_u^{m+1}-\bm e_u^{m})\|^{2} + \lambda\|\divv (\bm e_u^{m+1}-\bm e_u^{m})\|^{2}\Big) .
\end{array}
\end{equation}
To estimate the last term in~\eqref{starstarstar} consider~\eqref{error:3} again. Subtracting the $m$-th error from the $(m+1)$-st, choosing $\bm w=\bm e_u^{m+1}-\bm e_u^{m}$ and 
applying Cauchy's inequality yields
\begin{equation}\label{square}
\|\ep(\bm e_u^{m+1}-\bm e_u^{m})\|^{2} + \lambda\|\divv (\bm e_u^{m+1}-\bm e_u^{m})\|^{2}
\leq \|\divv(\bm e_u^{m+1}-\bm e_u^{m})\| \left \|\displaystyle\sum_{i=1}^{n} (e_{p_i}^{m+1}-e_{p_i}^{m})\right\|.
\end{equation}
Next, from~\eqref{ck} we have that $\|\ep(\bm e_u^{m+1}-\bm e_u^{m})\|\ge c_{K}\|\divv(\bm e_u^{m+1}-\bm e_u^{m})\|$,
which implies
\begin{equation*}
 (c_K^{2}+\lambda)\|\divv (\bm e_u^{m+1}-\bm e_u^{m})\|
\leq \left \|\displaystyle\sum_{i=1}^{n} e_{p_i}^{m+1}-\displaystyle\sum_{i=1}^{n} e_{p_i}^{m}\right\|,
\end{equation*}
that is,
\begin{equation}\label{estimate:div}
\|\divv (\bm e_u^{m+1}-\bm e_u^{m})\|
\leq  \frac{1}{ \lambda+c_K^2} \left\|\displaystyle\sum_{i=1}^{n} e_{p_i}^{m+1}-\displaystyle\sum_{i=1}^{n} e_{p_i}^{m}\right\|.
\end{equation}
Hence 
\begin{equation}\label{eq:18a}
\begin{array}{rcl}
\|\ep(\bm e_u^{m+1}-\bm e_u^{m})\|^2 + \lambda\|\divv (\bm e_u^{m+1}-\bm e_u^{m})\|^2
&\leq&  \displaystyle\frac{1}{ \lambda+c_K^2} \left\|\displaystyle\sum_{i=1}^{n} (e_{p_i}^{m+1}-e_{p_i}^{m})\right\|^2 \\
&\leq&  L  \left\|\displaystyle\sum_{i=1}^{n} (e_{p_i}^{m+1}-e_{p_i}^{m})\right\|^2.
\end{array}
\end{equation}
Therefore, using~\eqref{eq:18a} in~\eqref{starstarstar}, we obtain
$$
\begin{array}{l}
\displaystyle\frac{1}{2}\big(\|\ep(\bm e_u^{m+1})\|^2+ \lambda \|\divv \bm e_u^{m+1}\|^2\Big)+\|R^{-1/2}\bm e_v^{m+1}\|^2+\|(\Lambda_1+\Lambda_2)^{1/2}\boldsymbol e_{p}^{m+1}\|^2
\\ 
\qquad +\displaystyle\frac{L}{2} \left \|\displaystyle\sum_{i=1}^{n} e_{p_i}^{m+1}\right\|^2+\frac{L}{2}\left\|\displaystyle\sum_{i=1}^{n} e_{p_i}^{m+1}-\displaystyle\sum_{i=1}^{n} e_{p_i}^{m}\right\|^2
%\\
\leq \displaystyle \frac{L}{2}\left \|\displaystyle\sum_{i=1}^{n} e_{p_i}^{m}\right\|^2+\frac{L}{2} \left\|\displaystyle\sum_{i=1}^{n} (e_{p_i}^{m+1}-e_{p_i}^{m})\right\|^2 ,
\end{array}
$$
which completes the proof.
\end{proof}
%\begin{remark}
%Note that for $L=\displaystyle\frac{1}{\lambda+c_K^2}$, the estimate of Lemma~\ref{lemma1} reads
%$$
%\begin{array}{l}
%\displaystyle \frac{1}{2}\big(\|\ep(\bm e_u^{m+1})\|^2+ \lambda \| \normalfont{\divv} \bm e_u^{m+1}\|^2\Big)+\|R^{-1/2}\bm e_v^{m+1}\|^2+\|(\Lambda_1+\Lambda_2)^{1/2}\boldsymbol e_{p}^{m+1}\|^2
%\\
%\qquad +  \displaystyle\frac{1}{2(\lambda+c_K^2)} \left\|\displaystyle\sum_{i=1}^{n} e_{p_i}^{m+1}\right\|^2
%\leq \displaystyle \frac{1}{2(\lambda+c_K^2)}\left\|\displaystyle\sum_{i=1}^{n} e_{p_i}^{m}\right\|^2.
%\end{array}
%$$
%\end{remark}

Using~\eqref{eq:lemma1}, we can prove that $\sum_{i=1}^{n} e_{p_i}^{m}\overset{m \rightarrow \infty}{\longrightarrow} 0$,
which is stated in the following theorem. %Theorem~\ref{theorem1}.

\begin{theorem}\label{theorem1}
Let $c_K$ and $\beta_s$ denote the constants in~\eqref{ck} and~\eqref{eq:inf_sup_s}, respectively.
The single rate fixed-stress iterative method for the static MPET problem~\eqref{MPET_weak} defined in Algorithm~\ref{alg1} is a
contraction that converges linearly for any $L\ge 1/(\lambda+c_K^2)$ independent of the model parameters and the time step
size $\tau$. 
The errors $\be_p^m$ in this case satisfy the inequality
\begin{equation}\label{error_sum_p}
\left\| \sum_{i=1}^n e_{p_i}^{m+1} \right\|^2 \le \normalfont{\text{rate}}^2(\lambda) \left\| \sum_{i=1}^n e_{p_i}^m \right\|^2
\end{equation}
with
\begin{equation}\label{rate_gen}
\normalfont{\text{rate}}^2(\lambda) \le \displaystyle\frac{1}{\frac{L^{-1}}{\beta_s^{-2}+\lambda}+1}.
\end{equation}
For $L =\displaystyle \frac{1}{\lambda+c_{K}^2}$, the convergence factor in~\eqref{error_sum_p}
can be estimated by
\begin{equation}\label{rate_spec}
\normalfont{\text{rate}}^2(\lambda)\le \displaystyle \frac{1}{\frac{\lambda+c_K^2}{\beta_s^{-2}+\lambda}+1}
\le \max\left\{ \displaystyle\frac{\beta_s^{-2}}{c_K^2+\beta_s^{-2}},\displaystyle\frac{1}{2}\right\}.
\end{equation}
\end{theorem}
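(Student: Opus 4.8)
The plan is to turn the energy bound of Lemma~\ref{lemma1} into a contraction estimate for the scalar quantity $\big\|\sum_{i=1}^n e_{p_i}^m\big\|$. Write $P:=\sum_{i=1}^n e_{p_i}^{m+1}$ and $a:=\|\ep(\be_u^{m+1})\|^2+\lambda\|\divv\be_u^{m+1}\|^2$. Since the flux term $\|R^{-1/2}\be_v^{m+1}\|^2$ and the pressure term $\|(\Lambda_1+\Lambda_2)^{1/2}\be_p^{m+1}\|^2$ in~\eqref{eq:lemma1} are nonnegative, dropping them leaves
\[
\tfrac12\,a+\tfrac{L}{2}\,\|P\|^2\;\le\;\tfrac{L}{2}\,\Big\|\,\sum_{i=1}^n e_{p_i}^m\,\Big\|^2 ,
\]
so the rate estimate~\eqref{error_sum_p}--\eqref{rate_gen} follows at once once I establish the lower bound $a\ge(\beta_s^{-2}+\lambda)^{-1}\|P\|^2$.

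To get that lower bound I would use the inf-sup condition~\eqref{eq:inf_sup_s}. Since $e_{p_i}^{m+1}=p_i^{m+1}-p_i\in P_i$, the function $P$ lies in $\sum_{i=1}^n P_i$, and~\eqref{eq:inf_sup_s}, read as a bounded right inverse of the divergence, produces $\bw^\ast\in\bU$ with $\divv\bw^\ast=P$ and $\|\bw^\ast\|_1\le\beta_s^{-1}\|P\|$. Testing the error equation~\eqref{error:3} with $\bw=\bw^\ast$ gives $\|P\|^2=(\divv\bw^\ast,P)=(\ep(\be_u^{m+1}),\ep(\bw^\ast))+\lambda(\divv\be_u^{m+1},\divv\bw^\ast)$, and applying the Cauchy--Schwarz inequality in the energy inner product~\eqref{norms-u} bounds the right-hand side by $\sqrt a\,\big(\|\ep(\bw^\ast)\|^2+\lambda\|\divv\bw^\ast\|^2\big)^{1/2}$. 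Now $\|\divv\bw^\ast\|^2=\|P\|^2$ exactly --- this is precisely why the stabilization in Algorithm~\ref{alg1} is built from the \emph{sum} of the pressures --- while $\|\ep(\bw^\ast)\|^2\le\|\nabla\bw^\ast\|^2\le\|\bw^\ast\|_1^2\le\beta_s^{-2}\|P\|^2$, so $\|P\|^2\le\sqrt a\,\sqrt{\beta_s^{-2}+\lambda}\,\|P\|$, i.e. $a\ge(\beta_s^{-2}+\lambda)^{-1}\|P\|^2$. Substituting this into the displayed inequality and rearranging yields~\eqref{error_sum_p} with $\text{rate}^2(\lambda)\le\big(1+L^{-1}/(\beta_s^{-2}+\lambda)\big)^{-1}$; since $L^{-1}/(\beta_s^{-2}+\lambda)>0$ this rate is strictly below $1$, so the iteration contracts $\big\|\sum_i e_{p_i}^m\big\|$, and feeding that back into~\eqref{eq:lemma1} drives $a$, $\|R^{-1/2}\be_v^{m+1}\|$ and $\|(\Lambda_1+\Lambda_2)^{1/2}\be_p^{m+1}\|$ to zero as well, which is the asserted linear convergence.

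For the special value $L=(\lambda+c_K^2)^{-1}$ I would substitute $L^{-1}=\lambda+c_K^2$ into~\eqref{rate_gen}, which gives the first inequality in~\eqref{rate_spec}. The second inequality is an elementary one-variable estimate: the function $g(\lambda):=\dfrac{\beta_s^{-2}+\lambda}{2\lambda+c_K^2+\beta_s^{-2}}$ has $g'(\lambda)$ of constant sign equal to that of $c_K^2-\beta_s^{-2}$, so over $\lambda\ge0$ its supremum is $g(0)=\dfrac{\beta_s^{-2}}{c_K^2+\beta_s^{-2}}$ when $c_K^2<\beta_s^{-2}$ and $\lim_{\lambda\to\infty}g(\lambda)=\tfrac12$ when $c_K^2\ge\beta_s^{-2}$; in both cases it is at most $\max\{\beta_s^{-2}/(c_K^2+\beta_s^{-2}),\tfrac12\}$. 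This also makes parameter-robustness transparent, because the final bound contains only the inf-sup constant $\beta_s$ and the Korn-type constant $c_K$ and none of $\lambda$, $R_i^{-1}$, $\alpha_{p_i}$, $\alpha_{ij}$ or the time step $\tau$.

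The hard part is the lower bound $a\ge(\beta_s^{-2}+\lambda)^{-1}\|P\|^2$, and in particular insisting on an \emph{exact} divergence lifting of $P$: testing~\eqref{error:3} with the plain inf-sup function for $P$ and bounding $\|\divv\bw^\ast\|\le\|\bw^\ast\|_1\le\beta_s^{-1}\|P\|$ would only give the weaker constant $\beta_s^2/(1+\lambda)$, whereas the exact lifting makes the $\lambda\|\divv\bw^\ast\|^2$ contribution equal to $\lambda\|P\|^2$ instead of $\lambda\beta_s^{-2}\|P\|^2$. Once that estimate is in hand the rest is just algebra.
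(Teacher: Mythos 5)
Your proof is correct and follows essentially the same route as the paper: drop the nonnegative flux and pressure terms from Lemma~\ref{lemma1}, build a bounded divergence lifting $\bw^\ast$ of $\sum_i e_{p_i}^{m+1}$ via the inf-sup condition~\eqref{eq:inf_sup_s}, test~\eqref{error:3} with it and apply Cauchy--Schwarz in the energy inner product to obtain $(\beta_s^{-2}+\lambda)^{-1}\|\sum_i e_{p_i}^{m+1}\|^2 \le \|\ep(\be_u^{m+1})\|^2+\lambda\|\divv\be_u^{m+1}\|^2$, then rearrange. The only cosmetic difference is that you spell out the chain $\|\ep(\bw^\ast)\|\le\|\nabla\bw^\ast\|\le\|\bw^\ast\|_1\le\beta_s^{-1}\|P\|$, whereas the paper writes the inf-sup bound directly in terms of $\|\ep(\bw_p)\|$; the endpoint analysis for the monotone function in~\eqref{rate_spec} is also the same.
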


\begin{proof}
By the Stokes inf-sup condition, we have that for any $\displaystyle\sum_{i=1}^{n} e_{p_i}^{m+1}$ there exists $\bm w_p\in \bm U$ such that 
\begin{equation}\label{4stars}
\divv \bm w_p=\displaystyle\sum_{i=1}^{n} e_{p_i}^{m+1}~~~~\hbox{and}~~~~ \|\ep(\bm w_p)\|\leq \beta_s^{-1} \left\|\displaystyle\sum_{i=1}^{n} e_{p_i}^{m+1}\right\|,
\end{equation}
where $\beta_s$ is the Stokes inf-sup constant in \eqref{eq:inf_sup_s}. Hence,  
$$
\|\ep(\bm w_p)\|^2+\lambda\|\divv \bm w_p\|^2\leq (\beta_s^{-2}+\lambda)\left\|\displaystyle\sum_{i=1}^{n} e_{p_i}^{m+1}\right\|^2.
$$
Taking $\bm w=\bm w_p$ in \eqref{error:3} and using~\eqref{4stars} yields
\begin{equation}
\left\|\displaystyle\sum_{i=1}^{n} e_{p_i}^{m+1}\right \|^2=(\ep(\bm e_u^{m+1}),\ep(\bm w_p)) + \lambda (\divv \bm e_u^{m+1},\divv \bm w_p).
\end{equation}
Now, applying Cauchy's inequality, we obtain
\begin{equation}
\begin{array}{l}
\left\|\displaystyle\sum_{i=1}^{n} e_{p_i}^{m+1}\right\|^2\leq (\|\ep(\bm e_u^{m+1})\|^2+\lambda \|\divv \bm e_u^{m+1}\|^2)^{\frac{1}{2}}(\|\ep(\bm w_p)\|^2+ \lambda \|\divv \bm w_p\|^2)^{\frac{1}{2}}
\\\\
\qquad \leq  (\|\ep(\bm e_u^{m+1})\|^2+\lambda \|\divv \bm e_u^{m+1}\|^2)^{\frac{1}{2}}(\beta_s^{-2}+\lambda)^{\frac{1}{2}}\left\|\displaystyle\sum_{i=1}^{n} e_{p_i}^{m+1}\right \|
\end{array}
\end{equation}
which implies
\begin{equation}\label{eq:ineq1}
(\beta_s^{-2}+\lambda)^{-1}\left\|\displaystyle\sum_{i=1}^{n} e_{p_i}^{m+1}\right\|^2\leq\|\ep(\bm e_u^{m+1})\|^2
+ \lambda \|\divv \bm e_u^{m+1}\|^2.
\end{equation}
Given Lemma~\ref{lemma1} and~\eqref{eq:ineq1}, we therefore obtain 
$$
\begin{array}{l}
\displaystyle\frac{1}{2}(\beta_s^{-2}+\lambda)^{-1}
\left\|\displaystyle\sum_{i=1}^{n} e_{p_i}^{m+1}\right\|^2+\|R^{-1/2}\bm e_v^{m+1}\|^2+\|(\Lambda_1+\Lambda_2)^{1/2}\boldsymbol e_{p}^{m+1}\|^2
\\
\qquad +\displaystyle \frac{L}{2} \left\|\displaystyle\sum_{i=1}^{n} e_{p_i}^{m+1}\right\|^2
\leq \displaystyle \frac{L}{2} \left\|\displaystyle\sum_{i=1}^{n} e_{p_i}^{m}\right\|^2
\end{array}
$$
and hence
$$
\begin{array}{l}
\left(\displaystyle\frac{1}{2\beta_s^{-2}+2\lambda}+\displaystyle\frac{L}{2}\right)
\left\|\displaystyle\sum_{i=1}^{n} e_{p_i}^{m+1}\right\|^2  \leq \displaystyle\frac{L}{2}
\left\|\displaystyle\sum_{i=1}^{n} e_{p_i}^{m}\right\|^2,
\end{array}
$$
or, equivalently,
$$
\begin{array}{l}
\left(\displaystyle\frac{L^{-1}}{\beta_s^{-2}+\lambda}+1\right)\left\|\displaystyle\sum_{i=1}^{n} e_{p_i}^{m+1}\right\|^2  
\leq \left\|\displaystyle\sum_{i=1}^{n} e_{p_i}^{m}\right\|^2
\end{array}
$$
which proves~\eqref{error_sum_p}--\eqref{rate_gen}. Finally, \eqref{rate_spec} follows from~\eqref{rate_gen} by choosing
$L=\displaystyle\frac{1}{\lambda+c_K^2}$ and noting that $\displaystyle \frac{1}{\frac{\lambda+c_K^2}{\beta_s^{-2}+\lambda}+1}$
is a monotone function for $\lambda > 0$.
\end{proof}

Note that $\left\|\sum\limits_{i=1}^{n} e_{p_i}^{m}\right\|$ only defines a seminorm of ${\boldsymbol  e}^m_{p}$ and
Theorem~\ref{theorem1} indicates the convergence rate of ${\boldsymbol  e}_{p}$ in this seminorm.
%$\left\|\sum\limits_{i=1}^{n} e_{p_i}^{m}\right\|$.
It still remains at this point unclear whether  
$\left\|\sum\limits_{i=1}^{n} e_{p_i}^{m}\right\|\rightarrow 0$ guarantees that ${\boldsymbol  e}^m_{p}$ converges to $\boldsymbol 0$.
 
Theorem~\ref{theorem2}, as stated later, clarifies this and demonstrates the uniform convergence of
${\boldsymbol  e}^m_{u}, {\boldsymbol  e}^m_{v}$ and ${\boldsymbol  e}^m_{p}$ for the fixed-stress iterative method 
utilizing the uniform stability results from~\cite{Hong2018conservativeMPET}.
Before we present Theorem~\ref{theorem2}, we introduce the matrices:
\begin{equation*}
 \Lambda_{L}:=
\begin{bmatrix}
L &\dots& {\dots} & L\\
\vdots& & &\vdots\\
\vdots& & &\vdots\\
L&\dots & {\dots}&L
\end{bmatrix}
\end{equation*}
and
$$ 
\C_e := \C+\C_L. 
$$
%Analogously 

Analogous to the assertion of Lemma 1 in \cite{Hong2018conservativeMPET}, the properties of $\C_e$ are as follows in Lemma \ref{etauu}:
%.
\begin{lemma} \label{etauu}
Let $\tilde{\C}= \C_3 + \C_4+\C_L, \tilde{\C}^{-1}=(\tilde b_{ij})_{n\times n}$, then $\tilde{\C}$ is SPD and for any n-dimensional vector $\bm x$, we have
\begin{align}
&(\C_e {\boldsymbol x},{\boldsymbol x}) \geq (\tilde{\C }{\boldsymbol x},{\boldsymbol x})\geq (\C_3 {\boldsymbol x},{\boldsymbol x}),\label{etauu:1}
\\&(\C_e^{-1} {\boldsymbol x},{\boldsymbol x}) \leq (\tilde{\C}^{-1}{\boldsymbol x},{\boldsymbol x})\leq (\C_3^{-1}{\boldsymbol x},{\boldsymbol x})=R^{-1}(\bm x, \bm x).\label{etauu:2}
\end{align}
Also,
{
\begin{align}
&0<\sum_{\substack{i=1}}^n\sum_{\substack{j=1}}^n \tilde b_{ij}\le (\frac{1}{\lambda_0}+L)^{-1}.\label{etauu:3}
\end{align}
}
\end{lemma}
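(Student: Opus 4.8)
The plan is to exploit the rank-one structure of $\C_4$ and $\C_L$. Writing $\mathbf{1}=(1,\ldots,1)^T\in\mathbb{R}^n$, we have $\C_3 = R\,I$, $\C_4 = \lambda_0^{-1}\mathbf{1}\mathbf{1}^T$ and $\C_L = L\,\mathbf{1}\mathbf{1}^T$, so that $\tilde{\C} = \C_3 + \C_4 + \C_L = R\,I + c\,\mathbf{1}\mathbf{1}^T$ with $c := \lambda_0^{-1} + L > 0$. Since $R>0$ and $c>0$, the matrix $\tilde{\C}$ is a sum of an SPD and an SPSD matrix, hence SPD; in particular it is invertible, and so is $\C_e = \C + \C_L$, being a sum of the SPD matrix $\C$ and the SPSD matrix $\C_L$.

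For~\eqref{etauu:1} I would simply split $\C_e = \C_1 + \C_2 + \tilde{\C}$ and $\tilde{\C} = \C_3 + (\C_4 + \C_L)$ and use that $\C_1,\C_2$ and $\C_4+\C_L$ are all SPSD (the first two by the already-noted properties of $\C_1,\C_2$, the last because $c>0$), which gives both inequalities in~\eqref{etauu:1} for every $\mathbf{x}$. For~\eqref{etauu:2} I would invoke the order-reversing property of inversion on the Loewner cone: from $\C_e \succeq \tilde{\C} \succeq \C_3 \succ 0$ it follows that $\C_e^{-1} \preceq \tilde{\C}^{-1} \preceq \C_3^{-1}$, and $\C_3^{-1} = R^{-1}I$ gives $(\C_3^{-1}\mathbf{x},\mathbf{x}) = R^{-1}(\mathbf{x},\mathbf{x})$.

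The only mildly computational point is~\eqref{etauu:3}. Note that $\sum_{i,j}\tilde b_{ij} = \mathbf{1}^T\tilde{\C}^{-1}\mathbf{1}$. Since $\mathbf{1}\mathbf{1}^T\mathbf{1} = n\,\mathbf{1}$, the vector $\mathbf{1}$ is an eigenvector of $\tilde{\C} = R\,I + c\,\mathbf{1}\mathbf{1}^T$ with eigenvalue $R + cn$, hence $\tilde{\C}^{-1}\mathbf{1} = (R+cn)^{-1}\mathbf{1}$ and $\mathbf{1}^T\tilde{\C}^{-1}\mathbf{1} = n/(R+cn)$. This is positive because $R,c>0$ and $n\ge1$, and $n/(R+cn) = (c + R/n)^{-1} \le c^{-1} = (\lambda_0^{-1}+L)^{-1}$ since $R/n>0$, which is exactly~\eqref{etauu:3}. (Equivalently one could compute $\tilde{\C}^{-1}$ explicitly by the Sherman--Morrison formula, but the eigenvector argument is shorter.)

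There is essentially no serious obstacle here: the statement is a linear-algebra lemma whose whole content is carried by the rank-one structure of $\C_4+\C_L$ and the elementary monotonicity of inversion with respect to the Loewner order. The only places to be careful are to record that $\C_e$ and $\C_3$ are genuinely invertible before applying the inversion-monotonicity step, and to keep in mind that $\lambda_0=\max\{1,\lambda\}$ enters only through the scalar $c$.
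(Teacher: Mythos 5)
Your proof is correct and complete. Note, however, that the paper itself does not actually prove Lemma~\ref{etauu}: it only remarks that the claim is ``analogous to the assertion of Lemma~1 in~\cite{Hong2018conservativeMPET}'' and cites that reference without reproducing an argument, so there is no in-text proof to compare against. Your route---writing $\C_4+\C_L = c\,\mathbf{1}\mathbf{1}^T$ with $c=\lambda_0^{-1}+L>0$, obtaining \eqref{etauu:1} from the positive semidefiniteness of $\C_1$, $\C_2$ and $\C_4+\C_L$, deducing \eqref{etauu:2} by operator-monotonicity of inversion on the Loewner cone, and using that $\mathbf{1}$ is an eigenvector of $\tilde\C$ with eigenvalue $R+cn$ to compute $\mathbf{1}^T\tilde\C^{-1}\mathbf{1}=n/(R+cn)\le c^{-1}$ for \eqref{etauu:3}---is the natural argument dictated by the rank-one structure, and is almost certainly what the cited lemma does (with $c=\lambda_0^{-1}$ there in place of $\lambda_0^{-1}+L$). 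The positivity facts you invoke ($R>0$, $\lambda_0\ge 1$, $L>0$) follow from \eqref{parameter:range} and the standing assumption $L\ge 1/(\lambda+c_K^2)$ in the surrounding analysis, so every step is justified and there are no gaps.
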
 
%Similar to the parameter-dependent norms \eqref{norms} defined by using the matrix $\C$, 
Subsequently, we can use $\C_e$ to define the following parameter-dependent norms:
\begin{subequations}\label{norms:e}
\begin{align}
({\boldsymbol u},{\boldsymbol w})_{\boldsymbol U}&= ({\boldmath \epsilon}({\boldsymbol u}),{\boldmath \epsilon}({\boldsymbol w}))
+ \lambda({\text{div\,}} {\boldsymbol u},{\text{div\,}} {\boldsymbol w}),\label{norms-u:e}\\
({\boldsymbol v}, {\boldsymbol z})_{\boldsymbol V_e}&= \sum_{i=1}^n(R_i^{-1}{\boldsymbol v}_i,{\boldsymbol z}_i)
+ (\C_e^{-1}  {\text{Div\,}} {\boldsymbol v} ,{\text{Div\,}} {\boldsymbol z}),\label{norms-v:e}\\
({\boldsymbol p},{\boldsymbol q})_{\boldsymbol P_e}&= (\C_e {\boldsymbol p},{\boldsymbol q}).\label{norms-p:e}
\end{align}
\end{subequations}
As stated in the following theorem, the fixed-stress coupling iteration for the MPET system converges uniformly. 
%}
\begin{theorem}\label{theorem2}
Consider the fixed-stress coupling iteration according to Algorithm~\ref{alg1} and assume that $L\ge 1/(\lambda+c_K^2)$.
Then the errors $\be_u^m$, $\be_v^m$ and $\be_p^m$ defined in~\eqref{errors_cont_prob}, measured in the
norms induced by~\eqref{norms:e}, satisfy the estimates
\begin{equation}\label{error_eu}
\| \be_u^m\|_{\bU} \le C_u[\normalfont{\text{rate}}(\lambda)]^{m},
\end{equation}
\begin{equation}\label{error_ev_ep}
\| \be_v^m\|_{\bV_e}+ \| \be_p^m\|_{\bP_e} \le C_{vp}[\normalfont{\text{rate}}(\lambda)]^{m},
\end{equation}
where the constants $C_u$ and $C_{vp}$ are independent of the model parameters and the time step size $\tau$.
%(and
Furthermore, 
the convergence rate $\rm{rate}(\lambda)$ satisfies~\eqref{rate_gen}.
%).
\end{theorem}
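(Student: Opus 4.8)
The plan is to reduce the whole statement to the geometric decay of the seminorm quantity $s_m:=\|\sum_{i=1}^{n}e_{p_i}^m\|$ already obtained in Theorem~\ref{theorem1}, so that $s_{m+1}\le\text{rate}(\lambda)\,s_m$ and hence $s_m\le[\text{rate}(\lambda)]^{m}s_0$, and then to control each of $\|\be_u^m\|_{\bU}$, $\|\be_v^m\|_{\bV_e}$ and $\|\be_p^m\|_{\bP_e}$ by $s_m$ (respectively $s_{m-1}$) with constants depending only on $c_K$, $\beta_d$, $\beta_s$ and the tuning parameter $L$, but neither on the model parameters nor on $\tau$. The ingredients are the error equations~\eqref{error:1}--\eqref{error:3}, Lemma~\ref{lemma1}, Lemma~\ref{etauu}, the Korn-type bound~\eqref{ck}, and the Darcy inf--sup condition~\eqref{eq:inf_sup_d}.

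For the displacement error I would use the error equation~\eqref{error:3}, which at level $m\ge1$ reads $(\ep(\be_u^m),\ep(\bw))+\lambda(\divv\be_u^m,\divv\bw)=(\sum_{i=1}^{n}e_{p_i}^m,\divv\bw)$, and test it with $\bw=\be_u^m$ to get $\|\ep(\be_u^m)\|^{2}+\lambda\|\divv\be_u^m\|^{2}\le s_m\|\divv\be_u^m\|$; combining this with~\eqref{ck} in the form $(c_K^{2}+\lambda)\|\divv\be_u^m\|^{2}\le\|\ep(\be_u^m)\|^{2}+\lambda\|\divv\be_u^m\|^{2}$ yields simultaneously the auxiliary estimate $\|\divv\be_u^m\|\le(c_K^{2}+\lambda)^{-1}s_m$, needed below, and $\|\be_u^m\|_{\bU}^{2}\le(c_K^{2}+\lambda)^{-1}s_m^{2}$. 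Hence $\|\be_u^m\|_{\bU}\le c_K^{-1}[\text{rate}(\lambda)]^{m}s_0$ for $m\ge1$, which together with the trivial bound at $m=0$ proves~\eqref{error_eu}.

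For the flux and pressure errors I would view~\eqref{error:1}--\eqref{error:2} as a generalized mixed (Darcy-type) problem determining $(\be_v^{m+1},\be_p^{m+1})\in\bV\times\bP$ from the right-hand side $\bq\mapsto -L(\sum_{i=1}^{n}e_{p_i}^m,\sum_{i=1}^{n}q_i)+(\divv\be_u^m,\sum_{i=1}^{n}q_i)$, with pressure block $\Lambda_1+\Lambda_2+\C_L$. Arguing as in the stability analysis of~\cite{Hong2018conservativeMPET}, but now with Lemma~\ref{etauu} in the role of Lemma~1 there (so that $\Lambda_1+\Lambda_2+\C_L\preceq\C_e$, $\C_e^{-1}\preceq\C_3^{-1}=R^{-1}I$, and $\mathbf{1}^{T}\C_e^{-1}\mathbf{1}\le(\lambda_0^{-1}+L)^{-1}$) and using~\eqref{eq:inf_sup_d}, this problem is inf--sup stable in the $\C_e$-weighted norms~\eqref{norms-v:e}--\eqref{norms-p:e} with a stability constant independent of the model parameters and of $\tau$. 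Then $\|\be_v^{m+1}\|_{\bV_e}+\|\be_p^{m+1}\|_{\bP_e}$ is bounded by that constant times the $\bP_e^{*}$-norm of the right-hand side, which I would estimate termwise: with the pointwise Cauchy--Schwarz bound $\|\sum_{i=1}^{n}q_i\|\le(\lambda_0^{-1}+L)^{-1/2}\|\bq\|_{\bP_e}$ (a consequence of $\mathbf{1}^{T}\C_e^{-1}\mathbf{1}\le(\lambda_0^{-1}+L)^{-1}$), the stabilization term contributes at most $L^{1/2}s_m$, while the term involving $\divv\be_u^m$ contributes at most $(c_K^{2}+\lambda)^{-1/2}s_m\le c_K^{-1}s_m$ once the auxiliary estimate above is used together with $(\lambda_0^{-1}+L)^{-1/2}\le L^{-1/2}\le(\lambda+c_K^{2})^{1/2}$, the last step being precisely the hypothesis $L\ge1/(\lambda+c_K^{2})$. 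Thus $\|\be_v^{m+1}\|_{\bV_e}+\|\be_p^{m+1}\|_{\bP_e}\le C(L^{1/2}+c_K^{-1})s_m\le C(L^{1/2}+c_K^{-1})[\text{rate}(\lambda)]^{m}s_0$, and after shifting the index and absorbing one factor $[\text{rate}(\lambda)]^{-1}$ into the constant---which is legitimate since the contraction factor of Theorem~\ref{theorem1} is bounded below by a positive constant depending only on $L$, $\beta_s$ and $c_K$---this gives~\eqref{error_ev_ep}. The last assertion of the theorem is exactly~\eqref{rate_gen}.

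The main obstacle is the uniform inf--sup stability of the flow subproblem~\eqref{error:1}--\eqref{error:2} measured in the $\C_e$-weighted norms: Lemma~\ref{lemma1} by itself controls only the $R^{-1}$-weighted $L^{2}$-part of $\be_v^{m+1}$ and the $(\Lambda_1+\Lambda_2)^{1/2}$- and $L$-seminorms of $\be_p^{m+1}$, whereas the norms~\eqref{norms-v:e}--\eqref{norms-p:e} additionally contain the term $(\C_e^{-1}{\rm Div}\,\be_v^{m+1},{\rm Div}\,\be_v^{m+1})$ in $\|\be_v^{m+1}\|_{\bV_e}^{2}$ and the $\Lambda_3+\Lambda_4$-contribution to $\|\be_p^{m+1}\|_{\bP_e}^{2}$. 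Recovering these missing pieces with parameter-independent constants is exactly what Lemma~\ref{etauu} is tailored for; for instance the $R$-weighted part $\sum_{i=1}^{n}R\|e_{p_i}^{m+1}\|^{2}$ of $\|\be_p^{m+1}\|_{\bP_e}^{2}$ can be bounded by first applying~\eqref{eq:inf_sup_d} in the $i$-th component of~\eqref{error:2} to obtain $\|e_{p_i}^{m+1}\|\le\beta_d^{-1}R_i^{-1}\|\be_{v_i}^{m+1}\|$, then using $R\,R_i^{-1}\le1$ to reach $\sum_{i=1}^{n}R\|e_{p_i}^{m+1}\|^{2}\le\beta_d^{-2}\|R^{-1/2}\be_v^{m+1}\|^{2}$, which Lemma~\ref{lemma1} controls. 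The abstract inf--sup step could therefore, if preferred, be replaced by such direct energy estimates for each missing piece, keeping the argument self-contained.
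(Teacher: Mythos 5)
Your proposal is correct in substance but follows a genuinely different route. The paper reinterprets the full three-field error system \eqref{error:1}--\eqref{error:3} as an MPET problem with zero body force, source term $g_e=-L\sum_i e_{p_i}^{m}+\divv\be_u^{m}-\divv\be_u^{m+1}$ in each network equation, and pressure block $\Lambda_1+\Lambda_2+\C_L$, and then invokes the three-field stability estimate of Theorem~\ref{continu_stability}(iii) with $\C$ replaced by $\C_e$; Lemma~\ref{etauu} enters only to bound the weighted dual norm $(\C_e^{-1}\bm g,\bm g)^{1/2}$ of the constant vector $\bm g=(g_e,\dots,g_e)^T$, and both displacement and flux/pressure bounds drop out of this single application. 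You instead decouple: the displacement bound is obtained directly from \eqref{error:3} (nearly identically to the paper's opening line of the proof), while the flux and pressure are controlled through the two-field Darcy-type subproblem \eqref{error:1}--\eqref{error:2} alone, either via a uniform inf--sup stability in the $\C_e$-weighted norms \eqref{norms-v:e}--\eqref{norms-p:e} (which you correctly note is not stated in the paper, whose Theorem~\ref{continu_stability} is a three-field result, and would need its own justification), or via direct energy estimates piece by piece. The paper's route is more economical because it reuses the existing three-field machinery wholesale; your route separates concerns and is more elementary, but the inf--sup alternative has a genuine gap, and the direct-estimate fallback, while sound in spirit and illustrated for the $\Lambda_3$-part of $\|\be_p^{m+1}\|_{\bP_e}$, still needs to be carried out explicitly for the $(\C_e^{-1}{\rm Div}\,\be_v^{m+1},{\rm Div}\,\be_v^{m+1})$-contribution to $\|\be_v^{m+1}\|_{\bV_e}^2$ (obtained from the strong form of \eqref{error:1} together with Lemma~\ref{etauu} and the bound $(\Lambda_1+\Lambda_2)^{1/2}\C_e^{-1}(\Lambda_1+\Lambda_2)^{1/2}\preceq I$) and for the $\Lambda_4$- and $\C_L$-parts of $\|\be_p^{m+1}\|_{\bP_e}^2$, before the argument is complete. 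Your right-hand-side estimates, the use of $\mathbf{1}^T\C_e^{-1}\mathbf{1}\le(\lambda_0^{-1}+L)^{-1}$, and the absorption of $[\text{rate}(\lambda)]^{-1}$ into the constant (legitimate since $L\ge 1/(\lambda+c_K^2)$ implies $\text{rate}^2(\lambda)\ge (1+\max\{1,c_K^2\beta_s^2\})^{-1}$) are all sound.
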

\begin{proof}
In the same manner as we derived~\eqref{eq:18a} we find 
$$
\|\ep(\bm e_u^{m+1})\|^2 + \lambda \|\divv \bm e_u^{m+1}\|^2\le \left( \frac{1}{c_K^2+\lambda}\right)\left\|\displaystyle\sum_{i=1}^{n} e_{p_i}^{m+1}\right\|^2,
$$
which shows~\eqref{error_eu}.
Moreover, rewriting the error equations \eqref{error:1}--\eqref{error:3} and using the definition of~$\C_L$
we deduce the variational problem 
\begin{equation}\label{errors:2}
\begin{array}{c}
 (\ep(\bm e_u^{m+1}),\ep(\bw)) + \lambda (\divv \bm e_u^{m+1},\divv \bw)-\left(\displaystyle\sum_{i=1}^{n} e_{p_i}^{m+1},\divv \bw\right)=0,
%\left(\displaystyle\sum_{i=1}^{n} (e_{p_i}^{m}-e_{p_i}^{m+1}),\divv \bw\right),
\\ \\
 (R^{-1} \bm e_v^{m+1},\bz)- (\boldsymbol e_{p}^{m+1},{\rm Div} \bz)  =0,
\\
-\left(\divv \bm e_u^{m+1},\displaystyle\sum_{i=1}^{n} q_i\right)-( {\rm Div} \bm e_v^{m+1},\boldsymbol q) 
-((\Lambda_1+\Lambda_2+\C_L) \boldsymbol e_p^{m+1},\boldsymbol q)%}
\\\\
=
-L \left(\displaystyle\sum_{i=1}^{n} e_{p_i}^{m}, \displaystyle\sum_{i=1}^{n} q_i\right) %}
+\left(\divv \bm e_u^{m}-\divv \bm e_u^{m+1},\displaystyle\sum_{i=1}^{n} q_i\right).
\end{array}
\end{equation}
 
Denote $g_e=-L\displaystyle\sum_{i=1}^{n} e_{p_i}^{m}+\divv \bm e_u^{m}-\divv \bm e_u^{m+1}$, then by the triangle inequality,
\eqref{estimate:div} and the contraction estimate \eqref{error_sum_p}, it follows that
\begin{equation}\label{error:stability1}
\begin{split}
\|g_e\|&=\left\|-L\displaystyle\sum_{i=1}^{n} e_{p_i}^{m}+\divv \bm e_u^{m}-\divv \bm e_u^{m+1}\right \|\\
&\leq  L\left \|\displaystyle\sum_{i=1}^{n} e_{p_i}^{m}\right \|+\frac{1}{\lambda+c_K^2}\left \|\displaystyle\sum_{i=1}^{n} e_{p_i}^{m}-\displaystyle\sum_{i=1}^{n} e_{p_i}^{m+1}\right \| \\
&\leq  L\left \|\displaystyle\sum_{i=1}^{n} e_{p_i}^{m}\right \|+\frac{2}{\lambda+c_K^2}\left \|\displaystyle\sum_{i=1}^{n} e_{p_i}^{m}\right \|\\
&\leq  3L\left \|\displaystyle\sum_{i=1}^{n} e_{p_i}^{m}\right \|\leq 3L [\normalfont{\text{rate}}(\lambda)]^{m} \left \|\displaystyle\sum_{i=1}^{n} e_{p_i}^{0}\right \|.
\end{split}
\end{equation}
Next, by taking $\boldsymbol f=\boldsymbol 0$, $\boldsymbol g=(g_e,g_e,\cdots,g_e)^T$ and replacing $\C_1+\C_2$ by $\C_1+\C_2+\C_L$
in \eqref{MPET_weak} and using the uniform stability estimate~\eqref{MPET_stability_estimate} with $\C$ replaced by $\C_e$, we obtain 
\begin{equation}\label{error:stability2}
\| \be_u^{m+1}\|_{\bU}+\| \be_v^{m+1}\|_{\bV_e}+\| \be_p^{m+1}\|_{\bP_e}\leq C_1 \|\bm g\|_{\bm P_e^*}=C_1 \|\C_e^{-\frac{1}{2}} \bm g\|=C_1 (\C_e^{-1} \bm g,\bm g)^{\frac{1}{2}}.
\end{equation}
Further, by Lemma \ref{etauu} and \eqref{error:stability1}, we have 
\begin{equation}\label{error:stability3}
\begin{split}
(\C_e^{-1} \bm g,\bm g)\leq (\tilde{\C}^{-1}{\boldsymbol g},{\boldsymbol g})&=(\tilde{\C}^{-1} (\underbrace{g_e,g_e\dots,g_e}_n)^T,(\underbrace{g_e,g_e\dots,g_e}_n)^T)\\
&=\left(\sum_{i=1}^{n}\sum_{j=1}^{n}\tilde{b}_{ij}\right)(g_e,g_e)\leq \left(\frac{1}{\lambda_0}+L\right)^{-1} (g_e,g_e)\\
&\leq 9\left(\frac{1}{\lambda_0}+L\right)^{-1} L^2 [\normalfont{\text{rate}}(\lambda)]^{2m} \left\|\displaystyle\sum_{i=1}^{n} e_{p_i}^{0}\right\|^2\\
&\leq 9 L [\normalfont{\text{rate}}(\lambda)]^{2m} \left\|\displaystyle\sum_{i=1}^{n} e_{p_i}^{0}\right\|^2.
\end{split}
\end{equation}
Combining \eqref{error:stability2} and \eqref{error:stability3} then implies \eqref{error_eu} and \eqref{error_ev_ep}. 
%} 
\end{proof}

%\newpage

\section{Discrete MPET problem}\label{sec:uni_stab_disc_model} 
In this section, mass conservative discretizations of the MPET model are discussed, see also~\cite{Hong2018conservativeMPET, HongKraus2017parameter}. 
The analysis here can be similarly used for other stable discretizations of the MPET model.

%In this section, we consider the fixed-stress method of the discontinuous Galerkin (DG) discretizations for the MPET model proposed in \cite{Hong2018conservativeMPET}. 
% 

\subsection{Notation}

We consider a shape-regular triangulation $\mathcal{T}_h$ of $\Omega$ into triangles/tetrahedrons. Here, the subscript $h$ indicates the mesh-size. 
The set of all interior edges/faces and the set of all boundary edges/faces of $\mathcal{T}_h$  are denoted by
$\mathcal{E}_h^{I}$ and $\mathcal{E}_h^{B}$ respectively and their union by $\mathcal{E}_h$.
%Let $\mathcal{T}_h$ be a shape-regular triangulation of mesh-size $h$ of
%the domain $\Omega$ into triangles $\{T\}$ and define the set of all interior edges (or faces) of  $\mathcal{T}_h$ by
%$\mathcal{E}_h^{I}$ and the set of all boundary edges (or faces) by
%$\mathcal{E}_h^{B}$. Let $\mathcal{E}_h=\mathcal{E}_h^{I}\cup \mathcal{E}_h^{B}$.

We define the broken Sobolev spaces  %, we introduce the spaces
$$
H^s(\mathcal{T}_h)=\{\phi\in L^2(\Omega), \mbox{ such that } \phi|_T\in H^s(T) \mbox{ for all } T\in \mathcal{T}_h \}
$$
for $s\geq 1$. 

%{The vector functions are represented in column-wise.}

%As we consider discontinuous Galerkin (DG) discretizations, we also define some trace operators. 
We next introduce the notion of jumps $[\cdot]$ and averages $\{\cdot \}$. %for the functions $q\in H^1(\mathcal{T}_h)$ and $\bm v \in H^1(\mathcal{T}_h)^d$.
Let $T_1$ and $T_2$ be two elements from the triangulation sharing an edge or face $e$ and $\bm n_1$ and $\bm n_2$ be the corresponding unit normal vectors to $e$ 
pointing to the exterior of $T_1$ and $T_2$. 
Then for $q\in H^1(\mathcal{T}_h)$, $\bm v \in H^1(\mathcal{T}_h)^d$ and $\bm \tau \in H^1(\mathcal{T}_h)^{d\times d}$ and any $e\in \mathcal{E}_h^{I}$ we define
%and the jumps are given by
\begin{equation*}
[q]=q|_{\partial T_1\cap e}-q|_{\partial T_2\cap e},\quad
[\bm v]=\bm v|_{\partial T_1\cap e}-\bm v|_{\partial T_2\cap e}
\end{equation*}
and
%, 
%the averages are defined as
\begin{equation*}
\begin{split}
\{\bm v\} &=\frac{1}{2}(\bm v|_{\partial T_1\cap e}\cdot \bm n_1-\bm
v|_{\partial T_2\cap e}\cdot \bm n_2), \quad 
\{\bm \tau\}=\frac{1}{2}(\bm \tau|_{\partial T_1\cap
e} \bm n_1-\bm \tau|_{\partial T_2\cap e} \bm n_2),
\end{split}
\end{equation*}
while for 
%When 
$e \in  \mathcal{E}_h^{B}$, %then the above quantities are defined as
\[[q]=q|_{e}, ~~ [\bm v]=\bm v|_{e},\quad
\{\bm v\}=\bm v |_{e}\cdot \bm n,\quad
\{\bm \tau\}=\bm \tau|_{e}\bm n.
\]

\subsection{Mixed finite element spaces and discrete formulation}\label{subsec:DGdiscretization}
In order to discretize the flow equations, we use a mixed finite element method to approximate the fluxes and pressures 
whereas for the mechanics problem we apply a discontinuous Galerkin method to approximate the displacement. 
%field. 
The considered finite element spaces are denoted by:
\begin{eqnarray*}
\bm U_h&=&\{\bm u \in H(\div ;\Omega):\bm u|_T \in \bm U(T),~T \in \mathcal{T}_h;~ \bm u \cdot
\bm n=0~\hbox{on}~\partial \Omega\},
\\[1ex]
\bm V_{i,h}&=&\{\bm v \in H(\div ;\Omega):\bm v|_T \in \bm V_i(T),~T \in \mathcal{T}_h;~ \bm v \cdot
\bm n=0~\hbox{on}~\partial \Omega\},~~i=1,\dots,n,
\\
P_{i,h}&=&\{q \in L^2(\Omega):q|_T \in Q_i(T),~T \in \mathcal{T}_h; ~\int_{\Omega}q dx=0\},~~i=1,\dots,n,
\end{eqnarray*}
where $\bm V_{i}(T)/Q_{i}(T)={\rm RT}_{l-1}(T)/{\rm P}_{l-1}(T)$ and $\bm U(T) = {\rm BDM}_l(T)$ or $\bm U(T) = {\rm BDFM}_l(T)$ for $l\ge 1$.
%The discretizations we analyze in the present context define the local spaces $\bm U(T)/\bm V_i(T)/Q_i(T)$
%via the triplets $BDM_l(T)/$ $RT_{l-1}(T)/P_{l-1}(T)$, or
%$BDFM_l(T)/RT_{l-1}(T)/P_{l-1}(T)$ for $l\ge 1$. 
For each of these choices, we would like to point out that $\div  \bm U(T)=\div  \bm V_i(T)=Q_i(T)$ is satisfied.
%
%We recall the following basic approximation properties of these spaces: For all
%$K\in \mathcal{T}_h$ and for all $\bm u \in H^s(K)^d$, there exists $\bm u_I\in
%\bm U(K)$ such that
%\begin{equation}\label{eq:70}
%\|\bm u-\bm u_I\|_{0,K}+h_K|\bm u-\bm u_I|_{1,K}+h_K^2|\bm u-\bm
%u_I|_{2,K}
%\leq C h_K^s|\bm u|_{s,K}, ~2\le s \leq l+1.
%\end{equation}

%Note that the normal component of any $\bm u\in \bm U_h$ is continuous
%on the internal edges and vanishes on the boundary edges. 
As commented also in~\cite{HongKraus2017parameter,Hong2018conservativeMPET}, for all $e\in \mathcal{E}_h$ and for all $\bm \tau\in H^1(\mathcal{T}_h)^d, \bm u\in \bm U_h$ it holds that
\begin{equation}\label{eq:72}
\int_e[\bm u_n]\cdot \bm \tau ds=0, \quad \mbox{from which it follows} \quad
\int_e[\bm u]\cdot \bm \tau ds=\int_e[\bm u_t]\cdot \bm \tau ds,
\end{equation}
where $\bm u_n$ and $\bm u_t$ denote the normal and tangential component of $\bm u$ respectively.

Using the notation
$$
\bm v_h^T=(\boldsymbol v^T_{1,h}, \cdots \boldsymbol v^T_{n,h}), \quad \bm p_h^T=(p_{1,h},\cdots, p_{n,h}), 
\quad \bm z_h^T=(\boldsymbol z^T_{1,h}, \cdots \boldsymbol z^T_{n,h}),\quad \bm q_h^T=(q_{1,h},\cdots, q_{n,h}),
$$
$$
\bm V_h=\boldsymbol V_{1,h}\times\cdots\times\boldsymbol V_{n,h},
\quad \bm P_h= P_{1,h}\times\cdots\times P_{n,h}, \quad  \bm X_h = \bm U_h\times\bm V_h\times \bm P_h
$$
the discretization of the variational problem~\eqref{MPET_weak} can be expressed as:
Find $(\boldsymbol u_h;\bm v_h;\bm p_h, )\in  \bm X_h$, such that for
any $(\boldsymbol w_h; \bm z_{h};\bm q_{h})\in  \bm X_{h}$
and $i=1,\dots, n$
\begin{subequations}\label{eq:75-77}
\begin{eqnarray}
 a_h(\bm u_h,\bm w_h) +\lambda ( \div \boldsymbol  u_h, \div \boldsymbol  w_h)
 - \sum_{i=1}^n(p_{i,h}, \div \boldsymbol  w_h)&=&(\boldsymbol f, \boldsymbol w_h),\label{eq:75}\\
  (R^{-1}_i\bv_{i,h},\bz_{i,h}) {-} (p_{i,h},\divv \bz_{i,h}) &=& 0,  \label{eq:76} \\
 	 -(\divv\bu_h,q_{i,h}) - (\divv\bv_{i,h},q_{i.h})  +\tilde{\alpha}_{ii} (p_{i,h},q_{i,h})
 	 +\sum_{\substack{{j=1}\\j\neq i}}^{n}\alpha_{ij} (p_{j,h},q_{i,h})&=& (g_i,q_{i,h})
	 %, 
	 \label{eq:77}
\end{eqnarray}
\end{subequations}
where
\begin{eqnarray}
a_h(\bm u,\bm w)&=&\label{78}
\sum _{T \in \mathcal{T}_h} \int_T \ep(\bm{u}) :
\ep(\bm{w}) dx-\sum_{e \in \mathcal{E}_h} \int_e \{\ep(\bm{u})\} \cdot [\bm w_t] ds\\
&&\nonumber-\sum _{e \in \mathcal{E}_h} \int_e \{\ep(\bm{w})\} \cdot [\bm u_t]ds+\sum _{e
\in \mathcal{E}_h} \int_e \eta h_e^{-1}[ \bm u_t] \cdot [\bm w_t] ds,
\end{eqnarray}
$\tilde{\alpha}_{ii}=-\alpha_{p_i}-\alpha_{ii}$, and $\eta $ is a stabilization parameter independent of the
parameters $\lambda,\,R_i^{-1}$, $\alpha_{p_i}$,  ${\alpha}_{ij}$, where $i,j \in \{1,\dots,n\}$, the network scale $n$
and the mesh size $h$. 

The discrete variational problem~\eqref{eq:75-77} corresponds to the weak formulation \eqref{MPET_weak} with
homogeneous boundary conditions. The DG discretizations for general rescaled boundary conditions can be found
in~\cite{Hong2018conservativeMPET, HongKraus2017parameter}.
%\begin{remark}
%The general rescaled boundary conditions 
%\begin{subequations}\label{eq:Biot_BC}
%\begin{eqnarray}
%p_i&=& p_{i,D} \qquad \mbox{on }~~ \Gamma_{p_i,D},~~ i=1,\dots,n,\\ 
%\bm v_i\cdot {\bm n} &=& q_{i,N}  \qquad \mbox{on}~~~ \Gamma_{p_i,N}, ~~i=1,\dots,n,\\ 
%\u&=& {\bu}_D  \quad ~~~~\mbox{on}~~~\Gamma_{\vek{u},D}, \\ 
%{(\bm \sigma -\sum_{i=1}^np_i\boldsymbol I){\vek{n}} }&=& {\bm g}_N \qquad~\,\mbox{on}~~~\Gamma_{\vek{u},N}.
%\end{eqnarray}
%\end{subequations}
%can be incorporated as explained in~\cite{HongKraus2017parameter}. 
%\end{remark}
%
%\begin{proposition}\label{prop:mass_cons}
%Let $(\boldsymbol u_h; \boldsymbol v_h; \bm p_h)\in \boldsymbol U_h\times\boldsymbol V_h\times \bm P_h$ be the solution
%of \eqref{eq:75}-\eqref{eq:77}, then the pointwise mass conservation equation is satisfied, that is
%\begin{equation}\label{eq:conservative}
%-{\rm div} \bu_h - {\rm div}\bv_{i,h}  -(\alpha_{p_i} +\alpha_{ii})p_{i,h} + \sum_{\substack{{j=1}\\j\neq i}}^{n}{\alpha}_{ij} p_{j,h} = Q_{i,h}g_i,~i=1,\dots,n ,~\forall x\in K, \forall K\in  \mathcal{T}_h,
%\end{equation}
%where the $L^2$-projection on $P_{i,h}$ is denoted by $Q_{i,h}$.
%%
%Hence, if $g_i=0$ then $-{\rm div} \bu_h - {\rm div}\bv_{i,h}  -(\alpha_{p_i} +\alpha_{ii})p_{i,h} + \sum\limits_{\substack{{j=1}\\j\neq i}}^{n}{\alpha}_{ij} p_{j,h}=0$.
%\end{proposition}

\subsection{Stability properties}
%For 
%$\bm u\in  H^1(\mathcal{T}_h)^d$,
Let $\bm u$ be a function from $\boldsymbol U_h$ and consider
%We introduce 
the mesh dependent norms
\begin{eqnarray*}
\|\bm{u}\|_h^2&=&\sum _{K \in \mathcal{T}_h} 
\|\ep(\bm{u})\|_{0,K}^2+\sum _{e \in \mathcal{E}_h} h_e^{-1}\|[ \bm u_t]\|_{0,e}^2, \\
\|\bm u\|_{1,h}^2&=&\sum _{K \in \mathcal{T}_h} \|\nabla\bm{u}\|_{0,K}^2+\sum _{e \in \mathcal{E}_h} h_e^{-1}\|[ \bm{u}_t]\|_{0,e}^2,
\end{eqnarray*}
%Next, for $\bm u\in  \boldsymbol U_h$, we define 
%the ``DG''-norm
\begin{equation}\label{DGnorm}
\|\bm u\|^2_{DG}=\sum _{K \in \mathcal{T}_h} \|\nabla\bm{u}\|_{0,K}^2+\sum _{e \in \mathcal{E}_h} h_e^{-1}\|[ \bm{u}_t]\|_{0,e}^2+\sum _{K \in \mathcal{T}_h}h_K^2|\bm{u}|^2_{2,K}
\end{equation}
and %, finally, the mesh-dependent norm $\|\bm \cdot \|_{\bm U_h}$
\begin{equation}\label{U_hnorm}
\|\bm u\|^2_{\bm U_h}=\|\bm u\|^2_{DG}+\lambda \|\div \bm u\|^2.
\end{equation}

The well-posedness and approximation properties of the DG formulation are detailed in~\cite{hong2016robust, honguniformly}.  
Here we briefly present some important results:
\begin{itemize}
\item $\|\cdot\|_{DG}$, $\|\cdot\|_h$, and $\|\cdot\|_{1,h}$ are equivalent on $\bm U_h$;
%, 
that is
$$
\|\bm{u}\|_{DG}\eqsim  \|\bm{u}\|_h\eqsim\|\bm u\|_{1,h},\,\mbox{for all}~\bm u \in \bm U_h.
$$
\item
%Secondly, the bilinear form 
$a_h(\cdot,\cdot)$ from~\eqref{78} is continuous and it holds true that
\begin{eqnarray}\label{continuity:a_h}
|a_h(\bm u,\bm w)|&\lesssim& \| \bm u  \|_{DG}  \| \bm w  \|_{DG},\quad\mbox{for all}\quad \bm u,~\bm w\in H^2(\mathcal{T}_h)^d.
\end{eqnarray}
%Thirdly, the LBB conditions 
\item The inf-sup conditions
\begin{equation}\label{inf-sup}
\begin{split}
 & \inf_{(q_{1,h},\cdots,q_{n,h})\in P_{1,h} \times \cdots \times P_{n,h}}\sup_{\bm{u}_h\in \bm{U}_h}\frac{(\operatorname{div}\bm{u}_h,\sum\limits_{i=1}^n q_{i,h})}{\|\bm{u}_h\|_{1,h}\|\sum\limits_{i=1}^n q_{i,h}\|}\geq \beta_{sd},\\
 &  \inf_{q_{i,h}\in P_{i,h}}\sup_{\bm{v}_{i,h}\in \bm{V}_{i,h}}\frac{(\operatorname{div}\bm{v}_{i,h},q_{i,h})}{\|\bm{v}_{i,h}\|_{\div}\|q_{i,h}\|}\geq \beta_{dd}, \quad i=1,\dots,n,
\end{split}
\end{equation}
are valid for our choice of {$\bm U_h, \bm{V}_h$ and $\bm P_h$}, see~\cite{schtzau2002mixed}, 
and the positive constants $\beta_{sd}$ and $\beta_{dd}$ are independent of %the parameters 
$\lambda$, $R_i^{-1}$, $\alpha_{p_i}$, ${\alpha}_{ij}$ for $i,j \in \{1,\dots,n\}$, the network scale $n$ and the mesh size $h$. 
\item %Finally, using standard techniques one can show that 
$a_h(\cdot,\cdot)$ is coercive, namely 
\begin{equation}\label{coercivity:a_h}
a_h(\bm{u}_h,\bm{u}_h)\geq \alpha_a \|\bm{u}_h\|^2_h,\quad\mbox{for all}\quad~\bm{u}_h\in\bm{U}_h,
\end{equation}
where $\alpha_a>0$ is a constant independent of the model and discretization parameters $\lambda,R_i^{-1}, \alpha_{p_i}$, ${\alpha}_{ij}, i,j \in \{1,\dots,n\}$, $n$ and $h$.
\end{itemize}

Using the definition of the matrices $\C_1$ and $\C_2$, we define the bilinear form 
\begin{equation}\label{eq:79}
\begin{split}
&\mathcal A_h((\bu_h; \bv_{h}; \bm p_{h}),(\bw_h; \bz_{h}; \bm q_{h}))=a_h(\bm u_h,\bm w_h)+\lambda ( \div \boldsymbol  u_h, \div \boldsymbol  w_h) -\sum_{i=1}^{n} (p_{i,h},{\rm div} \bw_h) 
\\& +\sum_{i=1}^{n}(R^{-1}_i\bv_{i,h},\bz_{i,h})- (\bp_h, \Divv \bz_h)
-(\divv\bu_h,\sum_{i=1}^{n}q_{i,h})- (\Divv \bv_h,\bq_h) - ((\C_1+\C_2)\bp_h,\bq_h)
\end{split}
\end{equation}
related to problem~\eqref{eq:75}--\eqref{eq:77}.

Similar to Theorem~\ref{continu_stability}, the following uniform stability results can be found in \cite{Hong2018conservativeMPET}.
\begin{theorem}\label{boundd}
~
\begin{itemize}
\item[(i)] For any $(\bm u_h; \bm v_h;\bm p_h)\in \bm X_h, (\bm w_h; \bm z_h; \bm q_h)\in \bm X_h$
there exists a positive constant $C_{bd}$ independent of  the parameters $\lambda$, $R_i^{-1}$, $\alpha_{p_i}$, ${\alpha}_{ij}$, 
$i,j \in \{1,\dots,n\}$, the network scale~$n$ and the mesh size~$h$ such that the inequality 
\begin{equation*}
|\mathcal A_h((\bu_h;\bv_{h};\bm p_{h}),(\bw_h;\bz_{h};\bm q_{h}))|\le C_{bd} (\|\bm u_h\|_{\bm U_h}+\|\bm v_h\|_{\bm V}
+\|\bm p_h\|_{\bm P})  (\|\bm w_h\|_{\bm U_h}+\|\bm z_h\|_{\bm V}+\|\bm q_h\|_{\bm P})
\end{equation*}
holds true.
\item[(ii)]  There exists a constant $\beta_0>0$ independent of  the model and discretization parameters $\lambda$, $R_i^{-1}$, $\alpha_{p_i}$,
${\alpha}_{ij}$, $i,j \in \{1,\dots,n\}$, $n$ and~$h$, such that 
%{\small
\begin{equation}\label{eq:80}
\displaystyle\inf_{(\boldsymbol u_h;  \boldsymbol v_h; \bm p_h)\in  \bm X_h} 
\sup_{(\boldsymbol w_h;\boldsymbol z_h; \bm q_h)\in \boldsymbol X_h}\frac{\mathcal A_h((\bu_h;\bv_{h};\bm p_{h}),(\bw_h;\bz_{h};\bm q_{h}))}{(\|\boldsymbol u_h\|_{\bm U_h}+
\|\boldsymbol v_h\|_{\boldsymbol V}+\|\bm p_h\|_{\bm P})(\|\boldsymbol w_h\|_{\bm U_h}+\|\boldsymbol z_h\|_{\boldsymbol V}+\|\bm q_h\|_{\bm P})}\geq \beta_0.
\end{equation}
%}

\item[(iii)] Let $(\boldsymbol u_h;  \boldsymbol v_h; \bm p_h)\in \bm X_h$ solve~\eqref{eq:75}-\eqref{eq:77}
and
$$\|\boldsymbol f\|_{\boldsymbol U_h^*}=
\sup\limits_{\boldsymbol w_h\in \boldsymbol U_h}\frac{(\boldsymbol f, \boldsymbol w_h)}
{\|\boldsymbol w_h\|_{\boldsymbol U_h}},\quad \|\bm g\|_{\bm P^*}=
\sup\limits_{\bm q_h\in \bm P_h}\frac{(\bm g, \bm q_h)}{\|\bm q_h\|_{\bm P}}.$$
Then the estimate
\begin{equation}
\|\boldsymbol u_h\|_{\boldsymbol U_h}+\|\boldsymbol  v_h\|_{\boldsymbol V}+\|\bm p_h\|_{\bm P}\leq C_2 (\|\boldsymbol f\|_{\boldsymbol U_h^*}+\|\bm g\|_{\bm P^*})
\end{equation} 
holds with 
a constant $C_2$ independent of $\lambda$, $R_i^{-1}$, $\alpha_{p_i}$, ${\alpha}_{ij}$, $i,j \in \{1,\dots,n\}$, the network scale $n$ and the mesh size $h$.
\end{itemize}
\end{theorem}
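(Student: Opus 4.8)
The plan is to establish (i)--(iii) as the discrete analogues of Theorem~\ref{continu_stability}, substituting on $\bm X_h$ the discrete tools for the continuous ones: the inf-sup conditions~\eqref{inf-sup} in place of~\eqref{eq:inf_sup_d}--\eqref{eq:inf_sup_s}, the coercivity~\eqref{coercivity:a_h} and continuity~\eqref{continuity:a_h} of $a_h$ in place of Korn's inequality and the continuity of $(\ep(\cdot),\ep(\cdot))$, and the equivalences $\|\cdot\|_{DG}\eqsim\|\cdot\|_h\eqsim\|\cdot\|_{1,h}$ on $\bm U_h$. For part~(i) I would bound $\mathcal A_h$ term by term with the Cauchy--Schwarz inequality: the $a_h$-term by~\eqref{continuity:a_h}; the weighted terms $\lambda(\div\bu_h,\div\bw_h)$, $\sum_i(R_i^{-1}\bv_{i,h},\bz_{i,h})$ and $((\C_1+\C_2)\bp_h,\bq_h)$ directly, using $\C_1+\C_2\le\C$ for the last; and the four mixed terms by pairing $\Divv$ against the $\C^{-1}$-block of the $\bm V$-norm and $\sum_i q_{i,h}$ against the $\C_4$-part of the $\bm P$-norm --- exactly the pairings for which the norms~\eqref{norms} were built. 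This gives $C_{bd}$ depending only on the continuity constant of $a_h$ and absolute constants.

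The core is part~(ii), where I would reproduce the Fortin-type construction of~\cite{Hong2018conservativeMPET}. Given $(\bu_h;\bv_h;\bp_h)\in\bm X_h$, first test with $(\bu_h;\bv_h;-\bp_h)$: all mixed terms cancel pairwise, and~\eqref{coercivity:a_h} together with the norm equivalence yields $\mathcal A_h((\bu_h;\bv_h;\bp_h),(\bu_h;\bv_h;-\bp_h))\gtrsim\|\bu_h\|_{\bm U_h}^2+\sum_i R_i^{-1}\|\bv_{i,h}\|^2+((\C_1+\C_2)\bp_h,\bp_h)$, which controls $\|\bu_h\|_{\bm U_h}$, the mass part of $\|\bv_h\|_{\bm V}$ and the $\C_1+\C_2$ part of $\|\bp_h\|_{\bm P}$. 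What remains is to recover the $\C_3$- and $\C_4$-contributions to $\|\bp_h\|_{\bm P}$ and the $(\C^{-1}\Divv\bv_h,\Divv\bv_h)$-contribution to $\|\bv_h\|_{\bm V}$.

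For the $\C_4$-part I use the first inf-sup condition in~\eqref{inf-sup} to choose $\bu_h^{S}\in\bm U_h$ with $\div\bu_h^{S}=\sum_i p_{i,h}$ and $\|\bu_h^{S}\|_{1,h}\lesssim\beta_{sd}^{-1}\|\sum_i p_{i,h}\|$, and add $-\delta_1\bu_h^{S}$ to the $\bw_h$-slot; the momentum block then produces $+\delta_1\|\sum_i p_{i,h}\|^2$ while the remaining terms are $\lesssim(1+\sqrt\lambda)\|\bu_h\|_{\bm U_h}\|\sum_i p_{i,h}\|$, so a Young step gives $\lambda_0^{-1}\|\sum_i p_{i,h}\|^2\lesssim\|\bu_h\|_{\bm U_h}^2$, which is precisely why $\C_4$ carries the weight $1/\lambda_0$. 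For the $\C_3$-part I use the second inf-sup condition in~\eqref{inf-sup} componentwise to choose $\bz_{i,h}^{D}\in\bm V_{i,h}$ with $\div\bz_{i,h}^{D}=p_{i,h}$, $\|\bz_{i,h}^{D}\|_{\div}\lesssim\beta_{dd}^{-1}\|p_{i,h}\|$, and add suitably (per-network) weighted copies to the $\bz_h$-slot, absorbing the resulting $\sum_i R_i^{-1}(\bv_{i,h},\bz_{i,h}^{D})$ against the already-controlled flux mass; for the $\Divv$-part of $\|\bv_h\|_{\bm V}$ I add $-\delta_3\,\C^{-1}\Divv\bv_h$ to the $\bq_h$-slot (which lies in $\bm P_h$ since $\div\bm V_{i,h}=P_{i,h}$ locally and $\C^{-1}$ is constant), producing $+\delta_3(\C^{-1}\Divv\bv_h,\Divv\bv_h)$. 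Choosing the $\delta_j$ small and parameter-free, the good terms dominate all cross terms by Young's inequality, the assembled test triple has norm $\lesssim\|\bu_h\|_{\bm U_h}+\|\bv_h\|_{\bm V}+\|\bp_h\|_{\bm P}$, and~\eqref{eq:80} follows with $\beta_0$ depending only on $\alpha_a$, the $a_h$-continuity constant, $\beta_{sd}$ and $\beta_{dd}$.

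Part~(iii) is then immediate: on the finite-dimensional $\bm X_h$ the inf-sup~\eqref{eq:80} forces bijectivity, hence a two-sided inf-sup with the same constant, so~\eqref{eq:75}--\eqref{eq:77} has a unique solution and $\|\bu_h\|_{\bm U_h}+\|\bv_h\|_{\bm V}+\|\bp_h\|_{\bm P}\le\beta_0^{-1}(\|\bff\|_{\bm U_h^*}+\|\bm g\|_{\bm P^*})$, the load pairing only against $\bw_h$ and $\bq_h$; thus $C_2=\beta_0^{-1}$ is parameter- and mesh-robust. I expect the bookkeeping in~(ii) to be the only genuine obstacle: fixing the corrector weights --- in particular the per-network scalings of the $\bz_{i,h}^{D}$ and the $1/\lambda_0$ weight in $\C_4$ --- so that every cross term is absorbed while all constants stay independent of $\lambda$, $R_i^{-1}$, $\alpha_{p_i}$, $\alpha_{ij}$, the network scale $n$ and the mesh size $h$; this is the same bookkeeping already carried out in the continuous proof of Theorem~\ref{continu_stability} in~\cite{Hong2018conservativeMPET}, and it transfers once~\eqref{inf-sup}, \eqref{coercivity:a_h} and~\eqref{continuity:a_h} are available.
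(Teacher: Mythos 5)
The paper does not prove Theorem~\ref{boundd} itself; it cites~\cite{Hong2018conservativeMPET} for the derivation, so there is no in-paper argument to compare against. Your sketch is the standard saddle-point inf-sup argument that underlies both that reference and the continuous Theorem~\ref{continu_stability}, and it is sound: the diagonal test $(\bu_h;\bv_h;-\bp_h)$ together with~\eqref{coercivity:a_h} controls $\|\bu_h\|_{\bm U_h}^2$, the $\sum_i R_i^{-1}\|\bv_{i,h}\|^2$ part of $\|\bv_h\|_{\bm V}^2$ and the $\Lambda_1+\Lambda_2$ part of $\|\bp_h\|_{\bm P}^2$; the Stokes corrector from~\eqref{inf-sup} recovers the $\Lambda_4$ (i.e.\ $1/\lambda_0$) weight; per-network Darcy correctors recover $\Lambda_3$; and augmenting the $\bq_h$-slot with $-\delta_3\,\Lambda^{-1}\Divv\bv_h\in\bm P_h$ recovers the $(\Lambda^{-1}\Divv\bv_h,\Divv\bv_h)$ contribution. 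The bookkeeping you defer --- scaling the Darcy correctors by $R$, and bounding the cross term $(\div\bu_h,\sum_i q_{i,h})$ via $\|\div\bu_h\|\le\lambda_0^{-1/2}\|\bu_h\|_{\bm U_h}$ together with the fact that the $\Lambda^{-1}$-norm of the all-ones vector is at most $\lambda_0^{1/2}$ --- is precisely why the weights $R$ and $1/\lambda_0$ sit in $\Lambda_3$ and $\Lambda_4$, and it closes without surprises; parts~(i) and~(iii) then follow as you describe from Cauchy--Schwarz in the $\Lambda$-weighted norms and from the symmetry of $\mathcal A_h$ on the finite-dimensional space $\bm X_h$.
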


\section{Fixed-stress method for the discrete MPET model}
In the manner of Algorithm~\ref{alg1}, we formulate the fixed-stress method for the mixed continuous-discontinuous Galerkin finite element method~\eqref{eq:75-77}:
%.
\begin{algorithm}[H]
\textbf{Step a}: Given $\bu_h^{m}$, we solve for $\bv_{i,h}^{m+1}$ and $p_{i,h}^{m+1}$%, $1 \le i \le n$,
\begin{equation}\label{fixed_stress_eq1:dis}
\begin{array}{l}
( - \divv\bv_{i,h}^{m+1},q_{i,h}) -((\alpha_{p_{i,h}} + {\alpha}_{ii}) p_{i,h}^{m+1},q_{i,h})+\left(\displaystyle\sum_{\substack{{j=1}\\j\neq i}}^{n}{\alpha}_{ij} p_{j,h}^{m+1},q_{i,h}\right) -L\left( \displaystyle\sum_{j=1}^{n} p_{j,h}^{m+1},q_{i,h}\right)
\\[1ex] \hspace{2.8cm}= (g_i,q_{i,h})-L \left(\displaystyle\sum_{j=1}^{n}p_{j,h}^{m},q_{i,h}\right)+(\divv \bu^{m},q_{i,h}),\quad 1 \le i \le n,
\end{array}
\end{equation}
and 
\begin{equation}\label{fixed_stress_eq2:dis}
(R_i^{-1} \bv_{i,h}^{m+1},\bz_{i,h})- (p_{i,h}^{m+1},\divv \bz_{i,h}) =0,\quad 1 \le i \le n.
\end{equation}
\\ \\
\noindent
\textbf{Step b}: Given $\bv_{i,h}^{m+1}$ and $p_{i,h}^{m+1}$, we solve for $\bu_h^{m+1}$ 
\begin{equation}\label{fixed_stress_eq3}
a_h(\bu_h^{m+1},\bw_h) + \lambda (\divv \bu_h^{m+1},\divv \bw_h)  = (\bff,\bw_h) +\sum_{i=1}^{n} ( p_{i,h}^{m+1},\divv \bw_h).
\end{equation}
\caption{\textbf{: Fixed-stress method for the discrete MPET problem}}\label{alg:dis}
\end{algorithm}
The main convergence result for Algorithm~\ref{alg:dis} is formulated in terms of the following quantities corresponding to the discrete case:
\begin{subequations}\label{errors_cont_prob:dis}
\begin{eqnarray}
\be_{u_h}^k & = & \bm u_h^k-\bm u_h\in \bU_h, \\
\be_{v_{i,h}}^k&=&\bm v_{i,h}^k-\bm v_{i,h} \in \bV_{i,h}, \quad i=1,\ldots,n, \\
e_{p_{i,h}}^k&=&p_{i,h}^k-p_{i,h} \in P_{i,h}, \quad i=1,\ldots,n, 
\end{eqnarray}
\end{subequations}
denoting the errors of the $k$-th iterates $\bm u_h^k$, $\bm v^k_{i,h}$, $p^k_{i,h}$, $i=1,\ldots,n$ generated by Algrorithm~\ref{alg:dis}. 
In the discrete case, the useful constant for defining the tuning parameter $L$ is the constant $c_{K_d}$ from the estimate
\begin{equation}\label{ck:dis}
a_h(\bw_h, \bw_h)\ge c^2_{K_d}\|\divv\bw_h\|^2 \quad \text{for all }\bw_h \in \bU_h.
\end{equation}
Note that $c_{K_d}$ is strictly positive and independent of the mesh size $h$. 
%which will be used for $\bw_h =\bm e_{u_h}^{m+1}-\bm e_{u_h}^{m}$ in the proof of the next Lemma.

Using the approach applied to proving Lemma \ref{lemma1}, for the continuous MPET model we obtain the corresponding lemma for the discrete case as follows:
\begin{lemma}\label{lemma1:dis}
The errors $\be_{u_h}^{m+1}$, $\be_{v_h}^{m+1}$ and $\be_{p_h}^{m+1}$ of the $(m+1)$-st fixed-stress iterate generated by Algorithm~\ref{alg:dis} for
$L \ge \displaystyle\frac{1}{\lambda+c_{K_d}^2}$ satisfy the estimate
\begin{equation}\label{eq:lemma1:dis}
\begin{array}{l}
\displaystyle \frac{1}{2}\big(a_h(\bm e_{u_h}^{m+1},\bm e_{u_h}^{m+1})+ \lambda \| \normalfont{\divv} \bm e_{u_h}^{m+1}\|^2\Big)+\|R^{-1/2}\bm e_{v_h}^{m+1}\|^2+\|(\Lambda_1+\Lambda_2)^{1/2}\boldsymbol e_{p_h}^{m+1}\|^2
 \\
%\hspace{0.8cm}
\qquad +\displaystyle \frac{L}{2} \left\| \displaystyle\sum_{i=1}^{n} e_{p_{i,h}}^{m+1}\right\|^2
\leq  \frac{L}{2}\left\|\sum_{i=1}^{n} e_{p_{i,h}}^{m}\right\|^2, \quad m=0,1,2,\ldots.
\end{array}
\end{equation}
%for $m=0,1,2,\ldots.$
\end{lemma}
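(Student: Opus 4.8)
The plan is to repeat, essentially verbatim, the three–step argument used for Lemma~\ref{lemma1}, with the bilinear form $a_h(\cdot,\cdot)$ playing the role of $(\ep(\cdot),\ep(\cdot))$ and the discrete coercivity estimate~\eqref{ck:dis} replacing Korn's inequality~\eqref{ck}. First I would write down the discrete error equations obtained by subtracting the discrete MPET system~\eqref{eq:75-77} from the iteration of Algorithm~\ref{alg:dis}; these have exactly the form~\eqref{error:1}--\eqref{error:3}, with every quantity carrying the subscript $h$ and with $a_h(\bm e_{u_h}^{m+1},\bw_h)$ in place of $(\ep(\bm e_u^{m+1}),\ep(\bw))$ in the displacement equation.

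Next I would test these discrete error equations with $\bm z_h=\bm e_{v_h}^{m+1}$, $\bm q_h=-\bm e_{p_h}^{m+1}$ and $\bm w_h=\bm e_{u_h}^{m+1}$, add them, and apply the polarization identity to the $L$-term to obtain the discrete counterpart of~\eqref{star}. Testing the displacement error equation a second time with $\bm w_h=\bm e_{u_h}^{m+1}-\bm e_{u_h}^{m}$ rewrites the remaining right-hand side term; then, using that $a_h$ is symmetric and, by~\eqref{coercivity:a_h}, positive semidefinite on $\bm U_h$, so that the Cauchy--Schwarz inequality holds for $a_h$, the same $\tfrac12$-weighted splitting as in the proof of Lemma~\ref{lemma1} yields the analogue of~\eqref{starstarstar}. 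There the only term still to be bounded is $\tfrac12\bigl(a_h(\bm e_{u_h}^{m+1}-\bm e_{u_h}^{m},\bm e_{u_h}^{m+1}-\bm e_{u_h}^{m})+\lambda\|\divv(\bm e_{u_h}^{m+1}-\bm e_{u_h}^{m})\|^2\bigr)$.

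To control this term I would subtract two consecutive displacement error equations, test with $\bm w_h=\bm e_{u_h}^{m+1}-\bm e_{u_h}^{m}$, apply Cauchy--Schwarz on the right-hand side, and then invoke~\eqref{ck:dis} exactly as in the passage from~\eqref{square} to~\eqref{eq:18a}. This gives $\|\divv(\bm e_{u_h}^{m+1}-\bm e_{u_h}^{m})\|\le(\lambda+c_{K_d}^2)^{-1}\|\sum_{i=1}^n(e_{p_{i,h}}^{m+1}-e_{p_{i,h}}^{m})\|$, whence the term in question is bounded by $L\|\sum_{i=1}^n(e_{p_{i,h}}^{m+1}-e_{p_{i,h}}^{m})\|^2$ precisely because $L\ge 1/(\lambda+c_{K_d}^2)$. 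Substituting back cancels the two difference terms and delivers~\eqref{eq:lemma1:dis}.

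Since the manipulation is a line-by-line transcription of the proof of Lemma~\ref{lemma1}, no genuinely new difficulty arises; the main thing to be careful about is simply the bookkeeping of subscripts and the replacement of $(\ep(\cdot),\ep(\cdot))$ by $a_h(\cdot,\cdot)$. Two small points deserve a word of justification: (a) that $a_h$, being symmetric and coercive on $\bm U_h$, obeys the Cauchy--Schwarz inequality used in the weighted splitting; and (b) that~\eqref{ck:dis} supplies the discrete surrogate of~\eqref{ck} with a constant $c_{K_d}$ that is strictly positive and independent of $h$, so that the admissible range $L\ge 1/(\lambda+c_{K_d}^2)$ and the resulting estimate remain robust in the mesh size.
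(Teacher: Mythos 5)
Your proposal is correct and follows exactly the route the paper itself indicates: the authors state that Lemma~\ref{lemma1:dis} is obtained "using the approach applied to proving Lemma~\ref{lemma1}," with $a_h(\cdot,\cdot)$ replacing $(\ep(\cdot),\ep(\cdot))$ and the discrete coercivity bound~\eqref{ck:dis} replacing~\eqref{ck}. The two points you flag for justification — Cauchy--Schwarz for the symmetric, coercive form $a_h$, and the $h$-independence of $c_{K_d}$ — are precisely the small checks that make the transcription rigorous, so nothing is missing.
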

By Lemma \ref{lemma1:dis}, again following the proof of Theorem \ref{theorem1} for the continuous MPET model, we obtain the corresponding statements, Theorem~\ref{theorem1:dis}, 
for the discrete case:
%.
\begin{theorem}\label{theorem1:dis}
Let $c_{K_d}$ and $\beta_{sd}$ denote the constants in~\eqref{ck:dis} and~\eqref{inf-sup} respectively.
The single rate fixed-stress iterative method for the discrete static MPET problem~\eqref{eq:75-77} defined in Algorithm~\ref{alg:dis} is a
contraction that converges linearly for any $L\ge 1/(\lambda+c_{K_d}^2)$ independent of the model parameters, the time step
size $\tau$ and the mesh size $h$. 
The errors $\be_{p_h}^m$ in this case satisfy the inequality
\begin{equation}\label{error_sum_p:dis}
\left\| \sum_{i=1}^n e_{p_{i,h}}^{m+1} \right\|^2 \le \normalfont{\text{rate}_d}^2(\lambda) \left\| \sum_{i=1}^n e_{p_{i,h}}^m \right\|^2
\end{equation}
where
\begin{equation}\label{rate_gen:dis}
\normalfont{\text{rate}_d}^2(\lambda) \le \displaystyle\frac{1}{\frac{L^{-1}}{\beta_{sd}^{-2}+\lambda}+1}.
\end{equation}
For $L =\displaystyle \frac{1}{\lambda+c^2_{K_d}}$, the convergence factor in~\eqref{error_sum_p:dis}
can be estimated by
\begin{equation}\label{rate_spec:dis}
\normalfont{\text{rate}_d}^2(\lambda)\le \displaystyle \frac{1}{\frac{\lambda+c_{K_d}^2}{\beta_{sd}^{-2}+\lambda}+1}
\le \max\left\{ \displaystyle\frac{\beta_{sd}^{-2}}{c_{K_d}^2+\beta_{sd}^{-2}},\displaystyle\frac{1}{2}\right\}.
\end{equation}
\end{theorem}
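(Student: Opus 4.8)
The plan is to transcribe the proof of Theorem~\ref{theorem1} line by line into the discrete setting. The inputs change as follows: Lemma~\ref{lemma1:dis} replaces Lemma~\ref{lemma1}; the discrete Stokes inf-sup condition~\eqref{inf-sup} with constant $\beta_{sd}$ replaces~\eqref{eq:inf_sup_s}; the estimate~\eqref{ck:dis} with constant $c_{K_d}$ replaces~\eqref{ck}; and the mechanics bilinear form $(\ep(\cdot),\ep(\cdot))$ is replaced by the DG form $a_h(\cdot,\cdot)$, for which I would invoke the coercivity~\eqref{coercivity:a_h}, the continuity~\eqref{continuity:a_h}, and the equivalence $\|\cdot\|_{DG}\eqsim\|\cdot\|_h\eqsim\|\cdot\|_{1,h}$ on $\bm U_h$. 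The only genuinely new ingredient to establish, beyond Lemma~\ref{lemma1:dis}, is the discrete counterpart of~\eqref{eq:ineq1}: a parameter- and $h$-robust lower bound of the discrete mechanical energy $a_h(\bm e_{u_h}^{m+1},\bm e_{u_h}^{m+1})+\lambda\|\divv\bm e_{u_h}^{m+1}\|^2$ by a fixed multiple of $\|\sum_{i=1}^n e_{p_{i,h}}^{m+1}\|^2$.

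To get this bound I would first apply~\eqref{inf-sup} to $\sum_{i=1}^n e_{p_{i,h}}^{m+1}$ to produce $\bm w_{p,h}\in\bm U_h$ with $\divv\bm w_{p,h}=\sum_{i=1}^n e_{p_{i,h}}^{m+1}$ and $\|\bm w_{p,h}\|_{1,h}\le\beta_{sd}^{-1}\|\sum_{i=1}^n e_{p_{i,h}}^{m+1}\|$; then insert $\bm w_h=\bm w_{p,h}$ into the discrete mechanics error equation (the analogue of~\eqref{error:3}, obtained by subtracting the exact discrete solution of~\eqref{eq:75-77} from Step b of Algorithm~\ref{alg:dis}), which gives $\|\sum_{i=1}^n e_{p_{i,h}}^{m+1}\|^2=a_h(\bm e_{u_h}^{m+1},\bm w_{p,h})+\lambda(\divv\bm e_{u_h}^{m+1},\divv\bm w_{p,h})$. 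Because $a_h$ is symmetric and, by~\eqref{coercivity:a_h}, coercive on $\bm U_h$, the bilinear form $a_h(\cdot,\cdot)+\lambda(\divv\cdot,\divv\cdot)$ is an inner product there and Cauchy--Schwarz applies; bounding its value on $\bm w_{p,h}$ via $a_h(\bm w_{p,h},\bm w_{p,h})\lesssim\|\bm w_{p,h}\|_{1,h}^2\le\beta_{sd}^{-2}\|\sum_{i=1}^n e_{p_{i,h}}^{m+1}\|^2$ and $\lambda\|\divv\bm w_{p,h}\|^2=\lambda\|\sum_{i=1}^n e_{p_{i,h}}^{m+1}\|^2$, and absorbing the parameter-independent continuity and norm-equivalence constants into (a relabelled) $\beta_{sd}$, I obtain $(\beta_{sd}^{-2}+\lambda)^{-1}\|\sum_{i=1}^n e_{p_{i,h}}^{m+1}\|^2\le a_h(\bm e_{u_h}^{m+1},\bm e_{u_h}^{m+1})+\lambda\|\divv\bm e_{u_h}^{m+1}\|^2$.

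With this inequality the remainder is identical to the end of the proof of Theorem~\ref{theorem1}: substitute it into~\eqref{eq:lemma1:dis}, replacing the first parenthesised term by $\tfrac12(\beta_{sd}^{-2}+\lambda)^{-1}\|\sum_{i=1}^n e_{p_{i,h}}^{m+1}\|^2$, discard the nonnegative terms $\|R^{-1/2}\bm e_{v_h}^{m+1}\|^2$ and $\|(\Lambda_1+\Lambda_2)^{1/2}\bm e_{p_h}^{m+1}\|^2$, and collect the coefficients of $\|\sum_{i=1}^n e_{p_{i,h}}^{m+1}\|^2$ to reach $\bigl(\tfrac{L^{-1}}{\beta_{sd}^{-2}+\lambda}+1\bigr)\|\sum_{i=1}^n e_{p_{i,h}}^{m+1}\|^2\le\|\sum_{i=1}^n e_{p_{i,h}}^{m}\|^2$. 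This is~\eqref{error_sum_p:dis}--\eqref{rate_gen:dis}, and since $L^{-1}/(\beta_{sd}^{-2}+\lambda)>0$ the factor is strictly below $1$, so the iteration is a linear contraction of the seminorm $\|\sum_{i=1}^n e_{p_{i,h}}^m\|$, uniformly in the model parameters, in $\tau$ and in $h$. Finally, setting $L=1/(\lambda+c_{K_d}^2)$ in~\eqref{rate_gen:dis} and using that $\lambda\mapsto\bigl(\tfrac{\lambda+c_{K_d}^2}{\beta_{sd}^{-2}+\lambda}+1\bigr)^{-1}$ is monotone on $(0,\infty)$, with endpoint values $\beta_{sd}^{-2}/(c_{K_d}^2+\beta_{sd}^{-2})$ as $\lambda\to0$ and $\tfrac12$ as $\lambda\to\infty$, yields~\eqref{rate_spec:dis}.

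The only step needing care, i.e.\ the main obstacle such as it is, is passing from the inf-sup bound on $\|\bm w_{p,h}\|_{1,h}$ to a bound on the DG energy $a_h(\bm w_{p,h},\bm w_{p,h})$: unlike the conforming case, where $\|\ep(\bm w_p)\|$ enters directly, here one must route through~\eqref{continuity:a_h} and the equivalence of $\|\cdot\|_{DG}$, $\|\cdot\|_h$, $\|\cdot\|_{1,h}$ on $\bm U_h$, and must also check via~\eqref{coercivity:a_h} that $a_h(\cdot,\cdot)+\lambda(\divv\cdot,\divv\cdot)$ is genuinely an inner product so that Cauchy--Schwarz is legitimate. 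All constants that appear --- $\beta_{sd}$, $c_{K_d}$, the coercivity and continuity constants of $a_h$, and the norm-equivalence constants --- are independent of $\lambda$, the $R_i^{-1}$, the $\alpha_{p_i}$, the $\alpha_{ij}$, the network scale $n$, the time step $\tau$ and the mesh size $h$, which is exactly what keeps the resulting rate parameter- and $h$-robust. Apart from this bookkeeping the argument is a verbatim copy of the proof of Theorem~\ref{theorem1}.
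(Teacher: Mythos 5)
Your proof is correct and follows the same route the paper takes: the paper derives Theorem~\ref{theorem1:dis} by applying the argument of Theorem~\ref{theorem1} verbatim, with Lemma~\ref{lemma1:dis}, the discrete inf-sup~\eqref{inf-sup}, and~\eqref{ck:dis} replacing their continuous counterparts. You rightly flag the one genuine wrinkle --- passing from the discrete inf-sup bound in $\|\cdot\|_{1,h}$ to a bound on $a_h(\bm w_{p,h},\bm w_{p,h})$ requires~\eqref{continuity:a_h} and the equivalence of the mesh-dependent norms, hence a parameter- and $h$-independent constant absorbed into (a relabelled) $\beta_{sd}$, a step the paper leaves implicit.
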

Note that Theorem \ref{theorem1:dis} only gives the convergence rate of $\be_{p_h}^m$ in the semi-norm $\left\| \sum_{i=1}^n e_{p_{i,h}}^{m} \right\|$. However, we can combine 
the estimates in Theorem \ref{theorem1:dis} with the uniform stability result presented in Theorem \ref{boundd} and follow the proof of Theorem \ref{theorem2} to obtain the following 
convergence results for $\be_{u_h}^m, \be_{v_h}^m$ and $\be_{p_h}^m$ in their respective parameter-dependent full norms:
\begin{theorem}\label{theorem2h}
The errors $\be_{u_h}^m$, $\be_{v_h}^m$ and $\be_{p_h}^m$ defined in~\eqref{errors_cont_prob:dis} measured in the norms induced by~\eqref{U_hnorm} and \eqref{norms}
satisfy the estimates:
\begin{equation}\label{error_eu:dis}
\| \be_{u_h}^m\|_{\bU_h}^2 \le C_{ud}[\normalfont{\text{rate}_d}(\lambda)]^{2m},
\end{equation}
\begin{equation}\label{error_ev_ep:dis}
\| \be_{v_h}^m\|_{\bV_e}^2 + \| \be_{p_h}^m\|_{\bP_e}^2 \le C_{vpd}[\normalfont{\text{rate}_d}(\lambda)]^{2m}, %},
\end{equation}
where the constants $C_{ud}$ and $C_{vpd}$ are independent of the model parameters, the time step size and the mesh size.
\end{theorem}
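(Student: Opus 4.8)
The plan is to transcribe the two-step argument of Theorem~\ref{theorem2} to the discrete setting, using Lemma~\ref{lemma1:dis} and Theorem~\ref{theorem1:dis} in place of Lemma~\ref{lemma1} and Theorem~\ref{theorem1}, the coercivity \eqref{coercivity:a_h} and the norm equivalence $\|\cdot\|_{DG}\eqsim\|\cdot\|_h$ together with \eqref{ck:dis} in place of the Korn-type estimate \eqref{ck}, the uniform discrete stability of Theorem~\ref{boundd} in place of Theorem~\ref{continu_stability}, and Lemma~\ref{etauu} as it stands.

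First I would treat the displacement error. Subtracting \eqref{eq:75} from Step~b of Algorithm~\ref{alg:dis} gives the discrete displacement error equation $a_h(\be_{u_h}^{m+1},\bw_h)+\lambda(\divv\be_{u_h}^{m+1},\divv\bw_h)=\big(\sum_{i=1}^{n}e_{p_{i,h}}^{m+1},\divv\bw_h\big)$ for all $\bw_h\in\bU_h$. Taking $\bw_h=\be_{u_h}^{m+1}$, using \eqref{ck:dis} to absorb $c_{K_d}^2\|\divv\be_{u_h}^{m+1}\|^2$ by $a_h(\be_{u_h}^{m+1},\be_{u_h}^{m+1})$ and then Cauchy--Schwarz, exactly as in the derivation of \eqref{eq:18a}, I obtain $a_h(\be_{u_h}^{m+1},\be_{u_h}^{m+1})+\lambda\|\divv\be_{u_h}^{m+1}\|^2\le(\lambda+c_{K_d}^2)^{-1}\big\|\sum_{i}e_{p_{i,h}}^{m+1}\big\|^2$. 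By coercivity \eqref{coercivity:a_h} and $\|\cdot\|_{DG}\eqsim\|\cdot\|_h$ the left-hand side controls $\|\be_{u_h}^{m+1}\|_{\bU_h}^2$ up to a uniform factor, so inserting the contraction \eqref{error_sum_p:dis} of Theorem~\ref{theorem1:dis} yields $\|\be_{u_h}^{m+1}\|_{\bU_h}^2\lesssim c_{K_d}^{-2}[\mathrm{rate}_d(\lambda)]^{2(m+1)}\big\|\sum_i e_{p_{i,h}}^{0}\big\|^2$, which is \eqref{error_eu:dis} for $m\ge 1$, the case $m=0$ being absorbed into $C_{ud}$.

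For the flux and pressure errors I would move the stabilization terms in Step~a of Algorithm~\ref{alg:dis} to the right-hand side and subtract \eqref{eq:75-77}, recasting the discrete error system, in analogy with \eqref{errors:2}, as a discrete MPET problem with pressure block $\C_1+\C_2+\C_L$, vanishing momentum load, and pressure load $\boldsymbol g=(g_e,\dots,g_e)^T$, where $g_e:=-L\sum_i e_{p_{i,h}}^{m}+\divv\be_{u_h}^{m}-\divv\be_{u_h}^{m+1}$. The discrete counterpart of \eqref{estimate:div} (obtained from \eqref{ck:dis}), the triangle inequality and \eqref{error_sum_p:dis} give $\|g_e\|\le 3L[\mathrm{rate}_d(\lambda)]^{m}\big\|\sum_i e_{p_{i,h}}^{0}\big\|$, as in \eqref{error:stability1}. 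Then I would apply Theorem~\ref{boundd}(iii) with $\C$ replaced by $\C_e$, i.e. in the norms \eqref{norms:e}, to get $\|\be_{u_h}^{m+1}\|_{\bU_h}+\|\be_{v_h}^{m+1}\|_{\bV_e}+\|\be_{p_h}^{m+1}\|_{\bP_e}\le C_2\,(\C_e^{-1}\boldsymbol g,\boldsymbol g)^{1/2}$ and bound, via Lemma~\ref{etauu} and \eqref{etauu:2}--\eqref{etauu:3}, $(\C_e^{-1}\boldsymbol g,\boldsymbol g)\le(\tilde{\C}^{-1}\boldsymbol g,\boldsymbol g)=\big(\sum_{i,j}\tilde b_{ij}\big)\|g_e\|^2\le(\lambda_0^{-1}+L)^{-1}\|g_e\|^2\le 9L[\mathrm{rate}_d(\lambda)]^{2m}\big\|\sum_i e_{p_{i,h}}^{0}\big\|^2$, mirroring \eqref{error:stability3}. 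This gives \eqref{error_ev_ep:dis} with $C_{vpd}$ depending only on $C_2$ and $L$, hence, for the admissible choice $L=(\lambda+c_{K_d}^2)^{-1}\le c_{K_d}^{-2}$, independent of the model parameters, $\tau$ and $h$.

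The hard part will be justifying the use of Theorem~\ref{boundd}(iii) after the pressure block $\C_1+\C_2$ has been augmented by the SPSD matrix $\C_L$ and the norms $\|\cdot\|_{\bV},\|\cdot\|_{\bP}$ have been replaced by $\|\cdot\|_{\bV_e},\|\cdot\|_{\bP_e}$, namely verifying that the resulting well-posedness constant stays independent of all model and discretization parameters. This is the discrete analogue of the step already used in the proof of Theorem~\ref{theorem2}, and it holds because the modification leaves the discrete inf-sup conditions \eqref{inf-sup} and the continuity/coercivity of $a_h$ untouched, while Lemma~\ref{etauu} supplies the matrix comparisons \eqref{etauu:1}--\eqref{etauu:2} that allow the stability argument of \cite{Hong2018conservativeMPET} to carry over verbatim with $\C_e$ in place of $\C$ and uniform constants. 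Once this is granted, the remainder is a line-by-line transcription of the continuous proofs.
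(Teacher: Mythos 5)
Your proof is correct and takes essentially the same route the paper intends: the text preceding Theorem~\ref{theorem2h} explicitly says it is obtained by combining Theorem~\ref{theorem1:dis} with the uniform discrete stability of Theorem~\ref{boundd} and following the proof of Theorem~\ref{theorem2}. You carry this transcription out faithfully, including the one genuinely new small step the discrete setting requires, namely invoking coercivity \eqref{coercivity:a_h} together with $\|\cdot\|_{DG}\eqsim\|\cdot\|_h$ so that $a_h(\be_{u_h}^{m+1},\be_{u_h}^{m+1})+\lambda\|\divv\be_{u_h}^{m+1}\|^2$ controls $\|\be_{u_h}^{m+1}\|_{\bU_h}^2$.
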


\section{Numerical results}\label{numerics}

In our numerical test setup, we assume that:

 %The general setting of the numerical tests is as in~\cite{Hong2018conservativeMPET}:
 \begin{itemize} 
 \item $\Omega=[0,1]$ is partitioned into $2N^2$ right-angled triangles with catheti of length $h = \displaystyle\frac{1}{N};$ 
 \item Problem~\eqref{MPET_weak} is discretized by a strongly conservative discontinuous Galerkin method based on a mixed finite element space formed by 
 the triplet of ${\rm BDM}_1/{\rm RT}_0/{\rm P}_0^{dc}$ elements;
 \item the constant for the fixed-stress splitting is $L=\displaystyle\frac{1}{1+\lambda}$; 
 \item the iterative process is terminated when residual reduction by a factor $10^8$ in the combined norm induced by the inner products~\eqref{norms} (the norm induced by the inverse of the preconditioner) 
is reached.
 \end{itemize}
Numerical experiments have been performed in FEniCS, \cite{AlnaesBlechta2015a,LoggMardalEtAl2012a}, and their aim was:
\begin{itemize}
\item[(i)] to validate the theoretical estimates for the convergence of the fixed-stress splitting;
\item[(ii)] to compare the performance of the latter with the preconditioned MinRes algorithm using the norm-equivalent preconditioner proposed in~\cite{Hong2018conservativeMPET}. 

\end{itemize}

\subsection{The two-network model}

The Biot-Barenblatt model involves two pressures and two fluxes. In our notation, it has the following formulation:

\begin{subequations}
\begin{align}
-{\rm div} (\bm \sigma-p_1\bm I-p_2\bm I ) &= \bm f ,\\
R^{-1}_i\bm v_i + \nabla p_i &= 0, \ \quad i=1,2, \\
-{\rm div} \bm u - {\rm div}\bm v_i -\alpha_{p_i}p_i + \sum_{\substack{j=1\\j\neq i}}^2\alpha_{ij} p_j &= g_i, \quad i=1,2 .
\end{align}
\label{eq:beweak}
\end{subequations}

%\noindent
Specifically, the subject of numerical study in this subsection is the cantilever bracket benchmark problem, see~\cite{NAFEMS1990}, for which 
$\boldsymbol f  =  \bm 0 $, $g_1  = g_2 =  0$. The boundary $\Gamma$ of the domain $\Omega=[0,1]^2$ is split into 
bottom, right, top and left boundaries denoted by
$\Gamma_1$, $\Gamma_2$, $\Gamma_3$ and $\Gamma_4$ respectively and 
$$
\begin{array}{rcll}
(\boldsymbol {\sigma} -p_1\boldsymbol I-p_2\boldsymbol I)\boldsymbol n & = &(0,0)^T & \text{ on }\Gamma_1\cup\Gamma_2, \\
(\boldsymbol {\sigma} -p_1\boldsymbol I-p_2\boldsymbol I)\boldsymbol n& = &(0,-1)^T &\text{ on }\Gamma_3, \\
\boldsymbol u & = & \boldsymbol  0 & \text{ on } \Gamma_4, \\
p_1 & = & 2 & \text{ on } \Gamma, \\
p_2 & = & 20 & \text{ on }\Gamma. 
\end{array}
$$
 
Table~\ref{parameters_Barenblatt} gives the base values of the model parameters as taken from~\cite{Kolesov2017}. 
We have varied the parameter $K_2$ over a wider range than $K_1$ 
since, at least, for the MinRes iteration it happened to be the more interesting case.
The results in Tables~\ref{tab:2}--\ref{tab:4} show very clearly the robust behaviour of the fixed-stress iteration with respect to mesh refinements and variation of the hydraulic conductivities 
$K_1$ and $K_2$, and also $\lambda$. Furthermore, they demonstrate its advantage over the MinRes method in terms of rate of convergence.

\begin{table}[h!]
\caption{Base values of model parameters for a Barenblatt model.}
\label{parameters_Barenblatt}
\begin{center}
\begin{tabular}{ccc}
\hline 
\cellcolor{viol!35} 
parameter & 
\cellcolor{viol!20} 
value & 
\cellcolor{viol!5} 
unit \\ \hline
$\lambda$ & $4.2$ & MPa \\
$\mu$ & $2.4$ & MPa \\
$c_{p_1}$ & $54$ & (GPa)$^{-1}$ \\
$c_{p_2}$ & $14$ & (GPa)$^{-1}$\\
$\alpha_{1}$ & $0.95$ & \\
$\alpha_{2}$ & $0.12$ & \\
\multirow{2}{*}{$\beta$} & $5$ & $10^{-10}$kg/(m$\cdot$s) \\
& $100$ & $10^{-10}$kg/(m$\cdot$s) \\
$K_1$ & $6.18$ & $10^{-15}$m$^2$ \\
$K_2$ & $27.2$ & $10^{-15}$m$^2$
\end{tabular}
\end{center}
\end{table}

\begin{table}[hbtp!]
\caption{Number of preconditioned MinRes and fixed-stress splitting iterations for residual reduction by a factor $10^8$ in
the norm induced by the preconditioner when solving the Barenblatt problem.}
\label{tab:2}
%\begin{center}
\centering
\begin{tabular}{c|c||c|rr|rr|rr|rr}
$h$ & $\beta$ &   & \multicolumn{2}{c}{$K_2$} &  \multicolumn{2}{c}{$K_2\cdot 10^{2}$} &  \multicolumn{2}{c}{$K_2\cdot 10^{4}$}  &  \multicolumn{2}{c}{$K_2\cdot 10^{6}$} \\ \hline \hline
\multirow{6}{*}{$\displaystyle \frac{1}{16}$} & 
\multirow{3}{*}{5E--10}  & $K_1\cdot 10^{-2}$ & 16 & \color{orange}{8} & 21 &\color{orange}{8} & 37 & \color{orange}{8} & 29 & \color{orange}{8}  \\
                               & & $K_1\cdot 10^{-1}$ & 16 & \color{orange}{8} & 21 &\color{orange}{8} & 37 & \color{orange}{8} & 29 & \color{orange}{8}  \\
                               & & $K_1$ & 16 & \color{orange}{8} & 21 &\color{orange}{8} & 37 & \color{orange}{8} & 29 & \color{orange}{8}  \\ \cline{2-11}
& \multirow{3}{*}{1E-8}  & $K_1\cdot 10^{-2}$ & 16 & \color{orange}{8} & 21 &\color{orange}{8} & 37 & \color{orange}{8} & 29 & \color{orange}{8}  \\   
                                     & & $K_1\cdot 10^{-1}$ & 16 & \color{orange}{8} & 21 &\color{orange}{8} & 37 & \color{orange}{8} & 29 & \color{orange}{8}  \\  
                                    & & $K_1$ & 16 & \color{orange}{8} & 21 &\color{orange}{8} & 37 & \color{orange}{8} & 29 & \color{orange}{8}  \\ \hline   
                                    \multirow{6}{*}{$\displaystyle \frac{1}{32}$} & 
\multirow{3}{*}{5E--10}  & $K_1\cdot 10^{-2}$ & 16 & \color{orange}{8} & 26 &\color{orange}{8} & 38 & \color{orange}{8} & 27 & \color{orange}{8}  \\
                               & & $K_1\cdot 10^{-1}$ & 16 & \color{orange}{8} & 26 &\color{orange}{8} & 38 & \color{orange}{8} & 27 & \color{orange}{8}  \\
                               & & $K_1$ & 16 & \color{orange}{8} & 26 &\color{orange}{8} & 38 & \color{orange}{8} & 27 & \color{orange}{8}  \\ \cline{2-11}
& \multirow{3}{*}{1E-8}  & $K_1\cdot 10^{-2}$ & 16 & \color{orange}{8} & 26 &\color{orange}{8} & 38 & \color{orange}{8} & 27 & \color{orange}{8}  \\   
                                     & & $K_1\cdot 10^{-1}$ & 16 & \color{orange}{8} & 26 &\color{orange}{8} & 38 & \color{orange}{8} & 27 & \color{orange}{8}  \\  
                                    & & $K_1$ & 16 & \color{orange}{8} & 26 &\color{orange}{8} & 38 & \color{orange}{8} & 27 & \color{orange}{8}  \\ \hline    
 \multirow{6}{*}{$\displaystyle \frac{1}{64}$} & 
\multirow{3}{*}{5E--10}  & $K_1\cdot 10^{-2}$ & 18 & \color{orange}{8} & 32 &\color{orange}{8} & 38 & \color{orange}{8} & 27 & \color{orange}{8}  \\
                               & & $K_1\cdot 10^{-1}$ & 18 & \color{orange}{8} & 32 &\color{orange}{8} & 38 & \color{orange}{8} & 27 & \color{orange}{8}  \\
                               & & $K_1$ & 18 & \color{orange}{8} & 32 &\color{orange}{8} & 38 & \color{orange}{8} & 27 & \color{orange}{8}  \\ \cline{2-11}
& \multirow{3}{*}{1E-8}  & $K_1\cdot 10^{-2}$ & 18 & \color{orange}{8} & 32 &\color{orange}{8} & 38 & \color{orange}{8} & 27 & \color{orange}{8}  \\   
                                     & & $K_1\cdot 10^{-1}$ & 18 & \color{orange}{8} & 32 &\color{orange}{8} & 38 & \color{orange}{8} & 27 & \color{orange}{8}  \\  
                                    & & $K_1$ & 18 & \color{orange}{8} & 32 &\color{orange}{8} & 38 & \color{orange}{8} & 27 & \color{orange}{8}  \\ \hline                                                              
\end{tabular}
%\end{center}
\end{table}

\begin{table}[hbtp!]
%\caption{Preconditioned MinRes and {\color{orange}fixed-stress splitting} convergence history for the Barenblatt problem
%{where we have redefined $\lambda := 0.01\cdot\lambda$}.}
\caption{Number of preconditioned MinRes and fixed-stress splitting iterations for residual reduction by a factor $10^8$ in
the norm induced by the preconditioner when solving the Barenblatt problem where we have redefined $\lambda:=0.01\cdot \lambda$.}
\label{tab:3}
%\begin{center}
\centering
\begin{tabular}{c|c||c|rr|rr|rr|rr}
$h$ & $\beta$ &   & \multicolumn{2}{c}{$K_2$} &  \multicolumn{2}{c}{$K_2\cdot 10^{2}$} &  \multicolumn{2}{c}{$K_2\cdot 10^{4}$}  &  \multicolumn{2}{c}{$K_2\cdot 10^{6}$} \\ \hline \hline
\multirow{6}{*}{$\displaystyle \frac{1}{16}$} & 
\multirow{3}{*}{5E--10}  & $K_1\cdot 10^{-2}$ & 24 & \color{orange}{11} & 38 &\color{orange}{11} & 71 & \color{orange}{11} & 42 & \color{orange}{11}  \\
                               & & $K_1\cdot 10^{-1}$ & 24 & \color{orange}{11} & 38 &\color{orange}{11} & 71 & \color{orange}{11} & 42 & \color{orange}{11}  \\
                               & & $K_1$ & 24 & \color{orange}{11} & 38 &\color{orange}{11} & 71 & \color{orange}{11} & 42 & \color{orange}{11}  \\ \cline{2-11}
& \multirow{3}{*}{1E-8}  & $K_1\cdot 10^{-2}$ & 24 & \color{orange}{11} & 38 &\color{orange}{11} & 71 & \color{orange}{11} & 42 & \color{orange}{11}  \\   
                                     & & $K_1\cdot 10^{-1}$ & 24 & \color{orange}{11} & 38 &\color{orange}{11} & 71 & \color{orange}{11} & 42 & \color{orange}{11}  \\  
                                    & & $K_1$ & 24 & \color{orange}{11} & 38 &\color{orange}{11} & 71 & \color{orange}{11} & 42 & \color{orange}{11}  \\ \hline    
                                    \multirow{6}{*}{$\displaystyle \frac{1}{32}$} & 
\multirow{3}{*}{5E--10}  & $K_1\cdot 10^{-2}$ & 25 & \color{orange}{10} & 45 &\color{orange}{10} & 66 & \color{orange}{10} & 38 & \color{orange}{10}  \\
                               & & $K_1\cdot 10^{-1}$ & 25 & \color{orange}{10} & 45 &\color{orange}{10} & 66 & \color{orange}{10} & 38 & \color{orange}{10}  \\
                               & & $K_1$ & 25 & \color{orange}{10} & 45 &\color{orange}{10} & 66 & \color{orange}{10} & 38 & \color{orange}{10}  \\ \cline{2-11}
& \multirow{3}{*}{1E-8}  & $K_1\cdot 10^{-2}$ & 25 & \color{orange}{10} & 45 &\color{orange}{10} & 66 & \color{orange}{10} & 38 & \color{orange}{10}  \\   
                                     & & $K_1\cdot 10^{-1}$ & 25 & \color{orange}{10} & 45 &\color{orange}{10} & 66 & \color{orange}{10} & 38 & \color{orange}{10}  \\  
                                    & & $K_1$ & 25 & \color{orange}{10} & 45 &\color{orange}{10} & 66 & \color{orange}{10} & 38 & \color{orange}{10}  \\ \hline 
                                     \multirow{6}{*}{$\displaystyle \frac{1}{64}$} & 
\multirow{3}{*}{5E--10}  & $K_1\cdot 10^{-2}$ & 25 & \color{orange}{10} & 57 &\color{orange}{10} & 66 & \color{orange}{10} & 38 & \color{orange}{10}  \\
                               & & $K_1\cdot 10^{-1}$ & 25 & \color{orange}{10} & 57 &\color{orange}{10} & 66 & \color{orange}{10} & 38 & \color{orange}{10}  \\
                               & & $K_1$ & 25 & \color{orange}{10} & 57 &\color{orange}{10} & 66 & \color{orange}{10} & 38 & \color{orange}{10}  \\ \cline{2-11}
& \multirow{3}{*}{1E-8}  & $K_1\cdot 10^{-2}$ & 25 & \color{orange}{10} & 57 &\color{orange}{10} & 66 & \color{orange}{10} & 38 & \color{orange}{10}  \\   
                                     & & $K_1\cdot 10^{-1}$ & 25 & \color{orange}{10} & 57 &\color{orange}{10} & 66 & \color{orange}{10} & 38 & \color{orange}{10}  \\  
                                    & & $K_1$ & 25 & \color{orange}{10} & 57 &\color{orange}{16} & 66 & \color{orange}{10} & 38 & \color{orange}{10}  \\ \hline                             
\end{tabular}
%\end{center}
\end{table}

\begin{table}[hbtp!]
\caption{Number of preconditioned MinRes and fixed-stress splitting iterations for residual reduction by a factor $10^8$ in
the norm induced by the preconditioner when solving the Barenblatt problem where we have redefined $\lambda:=100\cdot\lambda$.}
\label{tab:4}
%\begin{center}
\centering
\begin{tabular}{c|c||c|rr|rr|rr|rr}
$h$ & $\beta$ &   & \multicolumn{2}{c}{$K_2$} &  \multicolumn{2}{c}{$K_2\cdot 10^{2}$} &  \multicolumn{2}{c}{$K_2\cdot 10^{4}$}  &  \multicolumn{2}{c}{$K_2\cdot 10^{6}$} \\ \hline \hline
\multirow{6}{*}{$\displaystyle \frac{1}{16}$} & 
\multirow{3}{*}{5E--10}  & $K_1\cdot 10^{-2}$ & 4 & \color{orange}{2} & 8 &\ \color{orange}{2}  & 16 &  \color{orange}{2}  & 14 &  \color{orange}{2}   \\
                               & & $K_1\cdot 10^{-1}$ & 4 &  \color{orange}{2}  & 8 & \color{orange}{2}  & 16 &  \color{orange}{2}  & 14 &  \color{orange}{2}   \\
                               & & $K_1$ & 4 &  \color{orange}{2}  & 8 & \color{orange}{2}  & 16 & \color{orange}{2}  & 14 &  \color{orange}{2}   \\ \cline{2-11}
& \multirow{3}{*}{1E-8}  & $K_1\cdot 10^{-2}$ & 4 & \color{orange}{2}  & 8 & \color{orange}{2}  & 16 &  \color{orange}{2} & 14 &  \color{orange}{2}   \\   
                                     & & $K_1\cdot 10^{-1}$ & 4 &  \color{orange}{2} & 8 & \color{orange}{2} & 16 &  \color{orange}{2}  & 14 &  \color{orange}{2}   \\  
                                    & & $K_1$ & 4 &  \color{orange}{2}  & 8 & \color{orange}{2}  & 16 &  \color{orange}{2}  & 14 &  \color{orange}{2}  \\ \hline 
                                    \multirow{6}{*}{$\displaystyle \frac{1}{32}$} & 
\multirow{3}{*}{5E--10}  & $K_1\cdot 10^{-2}$ & 6 & \color{orange}{2} & 12 &\ \color{orange}{2}  & 20 &  \color{orange}{2}  & 14 &  \color{orange}{2}   \\
                               & & $K_1\cdot 10^{-1}$ & 6 &  \color{orange}{2}  & 12 & \color{orange}{2}  & 20 &  \color{orange}{2}  & 14 &  \color{orange}{2}   \\
                               & & $K_1$ & 6 &  \color{orange}{2}  & 12 & \color{orange}{2}  & 20 & \color{orange}{3}  & 14 &  \color{orange}{2}   \\ \cline{2-11}
& \multirow{3}{*}{1E-8}  & $K_1\cdot 10^{-2}$ & 6 & \color{orange}{2}  & 12 & \color{orange}{2}  & 20 &  \color{orange}{2} & 14 &  \color{orange}{2}   \\   
                                     & & $K_1\cdot 10^{-1}$ & 6 &  \color{orange}{2} & 12 & \color{orange}{2} & 20 &  \color{orange}{2}  & 14 &  \color{orange}{2}   \\  
                                    & & $K_1$ & 6 &  \color{orange}{2}  & 12 & \color{orange}{2}  & 20 &  \color{orange}{2}  & 14 &  \color{orange}{2}  \\ \hline 
                                      \multirow{6}{*}{$\displaystyle \frac{1}{64}$} & 
\multirow{3}{*}{5E--10}  & $K_1\cdot 10^{-2}$ & 7 & \color{orange}{2} & 16 &\ \color{orange}{2}  & 21 &  \color{orange}{2}  & 14 &  \color{orange}{2}   \\
                               & & $K_1\cdot 10^{-1}$ & 7 &  \color{orange}{2}  & 16 & \color{orange}{2}  & 21 &  \color{orange}{2}  & 14 &  \color{orange}{2}   \\
                               & & $K_1$ & 7 &  \color{orange}{2}  & 16 & \color{orange}{2}  & 21 & \color{orange}{2}  & 14 &  \color{orange}{2}   \\ \cline{2-11}
& \multirow{3}{*}{1E-8}  & $K_1\cdot 10^{-2}$ & 7 & \color{orange}{2}  & 16 & \color{orange}{2}  & 21 &  \color{orange}{2} & 14 &  \color{orange}{2}   \\   
                                     & & $K_1\cdot 10^{-1}$ & 7 &  \color{orange}{2} & 16 & \color{orange}{2} & 21 &  \color{orange}{2}  & 14 &  \color{orange}{2}   \\  
                                    & & $K_1$ & 7 &  \color{orange}{2}  & 16 & \color{orange}{2}  & 21 &  \color{orange}{2}  & 14 &  \color{orange}{2}  \\ \hline                            
\end{tabular}
%\end{center}
\end{table}

\subsection{The four-network model}

This subsection is devoted to the four-network MPET model. As with the previous example, 
the boundary $\Gamma$ of $\Omega$ is split into bottom ($\Gamma_1$), right ($\Gamma_2$), top~($\Gamma_3$), and left ($\Gamma_4$) boundaries. 
The considered boundary conditions are chosen as: 
$$
\begin{array}{rcll}
(\boldsymbol {\sigma} -p_1\boldsymbol I-p_2\boldsymbol I-p_3\boldsymbol I-p_4\boldsymbol I)\boldsymbol n & = &(0,0)^T & \text{ on }\Gamma_1\cup\Gamma_2, \\
(\boldsymbol {\sigma} -p_1\boldsymbol I-p_2\boldsymbol I-p_3\boldsymbol I-p_4\boldsymbol I)\boldsymbol n& = &(0,-1)^T &\text{ on }\Gamma_3, \\
\boldsymbol u & = & \boldsymbol  0 & \text{ on } \Gamma_4, \\
p_1 & = & 2 & \text{ on } \Gamma, \\
p_2 & = & 20 & \text{ on }\Gamma, \\
p_3 & = & 30 & \text{ on } \Gamma, \\
p_4 & = & 40 & \text{ on }\Gamma,
\end{array}
$$
whereas the right hand sides are $\boldsymbol f=\boldsymbol 0$, $g_1=g_2=g_3=g_4=0$. 

Table~\ref{parameters_MPET4} shows the base values of the parameters which have been taken from~\cite{Vardakis2016investigating}. 
%where the four-network MPET model has been used to simulate fluid flow in the human brain. 
The presented numerical results in Table~\ref{tab7} demonstrate again the superiority of the fixed-stress splitting method over the preconditioned MinRes algorithm 
and its robustness with respect to large variations of the coefficients $\lambda$, $K_3$ and $K=K_1=K_2=K_4$.  

%Note that in all three examples, i.e., for the one-network problem, the two-network-problem and the four-network
%problem the observed average residual reduction factors were always below $0.7$ and did not increase as the number of networks
%was increased, which is in accordance with the theoretical findings.
%%
%Moreover, the authors have tried to perform the numerical tests for the parameter ranges leading to the worst results.

\begin{table}[h!]
\caption{Base values of model parameters for a four-network MPET model.}
\label{parameters_MPET4}
%\begin{center}
\centering
\begin{tabular}{ccc}
\hline 
\cellcolor{pinkred!35} 
parameter & 
\cellcolor{pinkred!20} 
value & 
\cellcolor{pinkred!5} 
unit \\[0.0ex] \hline
$\lambda$ & $505$ & Nm$^{-2}$ \\ [0.0ex]
$\mu$ & $216$ & Nm$^{-2}$ \\ [0.0ex]
$c_{p_1}=c_{p_2}=c_{p_3}=c_{p_4}$ & $4.5\cdot10^{-10}$ & m$^2$N$^{-1}$ \\ [0.0ex]
$\alpha_{1}=\alpha_{2}=\alpha_{3}=\alpha_{4}$  & $0.99$ &\\ [0.0ex]
$\beta_{12}=\beta_{24}$ & $1.5\cdot 10^{-19}$ & m$^2$N$^{-1}$s$^{-1}$ \\ [0.0ex]
$\beta_{23}$ & $2.0\cdot 10^{-19}$ & m$^2$N$^{-1}$s$^{-1}$ \\[0.0ex]
$\beta_{34}$ & $1.0\cdot 10^{-13}$ & m$^2$N$^{-1}$s$^{-1}$ \\ [0.0ex]
$K_1=K_2=K_4=K$ & $(1.0 \cdot 10^{-10})/(2.67\cdot10^{-3})$ & m$^2/$Nsm$^{-2}$ \\ [0.0ex]
$K_3$ & $(1.4 \cdot 10^{-14})/(8.9\cdot10^{-4})$ & m$^2/$Nsm$^{-2}$
\end{tabular}
%\end{center}
\end{table}

\begin{table}[htb!]
\caption{Number of preconditioned MinRes and fixed-stress splitting iterations for residual reduction by a factor $10^8$ in
the norm induced by the preconditioner when solving the four-network MPET problem.}
\label{tab7}
\begin{center}
%\resizebox{\columnwidth}{!}{
\begin{tabular}{c|c|c|rr|rr|rr|rr|rr|rr}
$h$ &  &  & \multicolumn{2}{c}{$K_3\cdot 10^{-2}$} &  \multicolumn{2}{c}{$K_3$} &  \multicolumn{2}{c}{$K_3\cdot 10^{2}$} &  \multicolumn{2}{c}{$K_3\cdot 10^{4}$} & 
 \multicolumn{2}{c}{$K_3\cdot 10^{6}$} &  \multicolumn{2}{c}{$K_3\cdot 10^{10}$} \\ [0.1ex] \hline 
 \multirow{9}{*}{$\displaystyle \frac{1}{16}$} & 
\multirow{3}{*}{$\lambda$}  & $K\cdot 10^{-2}$ & 34 &  \color{orange}{10}  & 34 & \color{orange}{10} & 26 & \color{orange}{10} & 23 & \color{orange}{10} & 21 & \color{orange}{10} &  21 & \color{orange}{10}\\ [0.1ex]
                                & & $K$  & 24 & \color{orange}{10} & 24 & \color{orange}{10} & 24 & \color{orange}{10} & 22 & \color{orange}{10}& 21 &\color{orange}{10} & 19 & \color{orange}{10} \\[0.1ex]
                               & & $K\cdot 10^{2}$ & 21 &  \color{orange}{10} & 21 &\color{orange}{10} & 23 & \color{orange}{10} & 23 & \color{orange}{10}& 31 & \color{orange}{10}  & 30 & \color{orange}{10} \\ [0.1ex] \cline{2-15}
& \multirow{3}{*}{$\lambda\cdot 10^4$}  & $K\cdot 10^{-2}$ & 18 &  \color{orange}{2} & 23 &  \color{orange}{2}  & 24 &  \color{orange}{2}  & 34 &  \color{orange}{2}  & 34 &  \color{orange}{2} &  34 &  \color{orange}{2} \\ [0.1ex]
                                    & & $K$  & 11 &  \color{orange}{2}  & 17 &  \color{orange}{2} & 34 &  \color{orange}{2}  & 31 &  \color{orange}{2}  & 31 &  \color{orange}{2}  & 31 &  \color{orange}{2}  \\ [0.1ex]
                                    & & $K\cdot 10^{2}$ & 9 &  \color{orange}{2}  & 14 &  \color{orange}{2}  & 32 &  \color{orange}{2} & 21 &  \color{orange}{2}  & 14 &  \color{orange}{2}   & 14 &  \color{orange}{2}  \\ [0.1ex] \cline{2-15}
& \multirow{3}{*}{$\lambda\cdot 10^8$}  & $K\cdot 10^{-2}$ & 14 &  \color{orange}{2} & 14 &  \color{orange}{2}  & 12 & \color{orange}{2} 
                                  & 12 &  \color{orange}{2}  & 12 & \color{orange}{2}  & 12 &  \color{orange}{2}  \\ [0.1ex]
                                   & & $K$  & 11 &  \color{orange}{2}  & 14 & \color{orange}{2} & 9 &  \color{orange}{2} 
                                   & 7 &  \color{orange}{2} & 7 &  \color{orange}{2}   & 7 &  \color{orange}{2} \\ [0.1ex]
                                  & & $K\cdot 10^{2}$ & 9  &  \color{orange}{2}  & 14 &  \color{orange}{2}  & 9 &  \color{orange}{2}  
                                  & 5 &  \color{orange}{2}  & 5 &  \color{orange}{2}   & 5 &  \color{orange}{2}   \\ [0.1ex] \hline            
\multirow{9}{*}{$\displaystyle \frac{1}{32}$} & 
\multirow{3}{*}{$\lambda$}  & $K\cdot 10^{-2}$ & 34 &  \color{orange}{10}  & 32 & \color{orange}{10} & 26 & \color{orange}{10} & 23 & \color{orange}{10} & 19 & \color{orange}{10} &  19 & \color{orange}{10}\\ [0.1ex]
                                & & $K$  & 24 & \color{orange}{10} & 24 & \color{orange}{10} & 24 & \color{orange}{10} & 22 & \color{orange}{10}& 21 &\color{orange}{10} & 20 & \color{orange}{10} \\[0.1ex]
                               & & $K\cdot 10^{2}$ & 21 &  \color{orange}{10} & 21 &\color{orange}{10} & 21 & \color{orange}{10} & 26 & \color{orange}{10}& 41 & \color{orange}{10}  & 39 & \color{orange}{10} \\ [0.1ex] \cline{2-15}
& \multirow{3}{*}{$\lambda\cdot 10^4$}  & $K\cdot 10^{-2}$ & 18 &  \color{orange}{2} & 25 &  \color{orange}{2}  & 30 &  \color{orange}{2}  & 34 &  \color{orange}{2}  & 34 &  \color{orange}{2} &  34 &  \color{orange}{2} \\ [0.1ex]
                                    & & $K$  & 12 &  \color{orange}{2}  & 20 &  \color{orange}{2} & 35 &  \color{orange}{2}  & 31 &  \color{orange}{2}  & 31 &  \color{orange}{2}  & 31 &  \color{orange}{2}  \\ [0.1ex]
                                    & & $K\cdot 10^{2}$ & 9 &  \color{orange}{2}  & 18 &  \color{orange}{2}  & 34 &  \color{orange}{2} & 21 &  \color{orange}{2}  & 14 &  \color{orange}{2}   & 14 &  \color{orange}{2}  \\ [0.1ex] \cline{2-15}
& \multirow{3}{*}{$\lambda\cdot 10^8$}  & $K\cdot 10^{-2}$ & 14 &  \color{orange}{2} & 14 &  \color{orange}{2}  & 12 & \color{orange}{2} 
                                  & 12 &  \color{orange}{2}  & 12 & \color{orange}{2}  & 12 &  \color{orange}{2}  \\ [0.1ex]
                                   & & $K$  & 12 &  \color{orange}{2}  & 14 & \color{orange}{2} & 9 &  \color{orange}{2} 
                                   & 7 &  \color{orange}{2} & 7 &  \color{orange}{2}   & 7 &  \color{orange}{2} \\ [0.1ex]
                                  & & $K\cdot 10^{2}$ & 11  &  \color{orange}{2}  & 14 &  \color{orange}{2}  & 9 &  \color{orange}{2}  
                                  & 6 &  \color{orange}{2}  & 5 &  \color{orange}{2}   & 5 &  \color{orange}{2}   \\ [0.1ex] \hline        
\multirow{9}{*}{$\displaystyle \frac{1}{64}$} & 
\multirow{3}{*}{$\lambda$}  & $K\cdot 10^{-2}$ & 34 &  \color{orange}{10}  & 32 & \color{orange}{10} & 26 & \color{orange}{10} & 21 & \color{orange}{10} & 19 & \color{orange}{10} &  19 & \color{orange}{10}\\ [0.1ex]
                                & & $K$  & 24 & \color{orange}{10} & 24 & \color{orange}{10} & 24 & \color{orange}{10} & 23 & \color{orange}{10}& 22 &\color{orange}{10} & 21 & \color{orange}{10} \\[0.1ex]
                               & & $K\cdot 10^{2}$ & 21 &  \color{orange}{10} & 21 &\color{orange}{10} & 21 & \color{orange}{10} & 36 & \color{orange}{10}& 45 & \color{orange}{10}  & 45 & \color{orange}{10} \\ [0.1ex] \cline{2-15}
& \multirow{3}{*}{$\lambda\cdot 10^4$}  & $K\cdot 10^{-2}$ & 20 &  \color{orange}{2} & 28 &  \color{orange}{2}  & 34 &  \color{orange}{2}  & 34 &  \color{orange}{2}  & 34 &  \color{orange}{2} &  34 &  \color{orange}{2} \\ [0.1ex]
                                    & & $K$  & 13 &  \color{orange}{2}  & 25 &  \color{orange}{2} & 36 &  \color{orange}{2}  & 31 &  \color{orange}{2}  & 31 &  \color{orange}{2}  & 31 &  \color{orange}{2}  \\ [0.1ex]
                                    & & $K\cdot 10^{2}$ & 6 &  \color{orange}{2}  & 25 &  \color{orange}{2}  & 36 &  \color{orange}{2} & 21 &  \color{orange}{2}  & 14 &  \color{orange}{2}   & 14 &  \color{orange}{2}  \\ [0.1ex] \cline{2-15}
& \multirow{3}{*}{$\lambda\cdot 10^8$}  & $K\cdot 10^{-2}$ & 14 &  \color{orange}{2} & 14 &  \color{orange}{2}  & 12 & \color{orange}{2} 
                                  & 12 &  \color{orange}{2}  & 12 & \color{orange}{2}  & 12 &  \color{orange}{2}  \\ [0.1ex]
                                   & & $K$  & 12 &  \color{orange}{2}  & 14 & \color{orange}{2} & 9 &  \color{orange}{2} 
                                   & 7 &  \color{orange}{2} & 7 &  \color{orange}{2}   & 7 &  \color{orange}{2} \\ [0.1ex]
                                  & & $K\cdot 10^{2}$ & 12  &  \color{orange}{2}  & 14 &  \color{orange}{2}  & 9 &  \color{orange}{2}  
                                  & 6 &  \color{orange}{2}  & 5 &  \color{orange}{2}   & 5 &  \color{orange}{2}   \\ [0.1ex] \hline                                                                 
\end{tabular}
%}
\end{center}
\end{table}

\section{Concluding remarks}\label{conclusions}

To the best of our knowledge, this paper is the first example of a proposed and analyzed fixed-stress splitting scheme for a three-field formulation of the MPET model. 
Fundamental to the linear 
convergence of the evolved algorithm is the incorporation of stabilization that employs the sum of all pressures. 
By applying the stability results proven in~\cite{Hong2018conservativeMPET}, 
we have demonstrated that the contraction rate of this fixed-point iteration is independent of any model physical parameters. 
Furthermore, the performed numerical experiments have clearly demonstrated the efficiency of the presented fixed-stress scheme along with its superiority over a fully implicit scheme 
which utilizes a norm-equivalent preconditioner.

\bibliographystyle{plain}
\bibliography{reference_mpet}

\end{document}